\newcommand{\ab}{\mathbf{a}}
\newcommand{\pb}{\mathbf{p}}
\newcommand{\kb}{\mathbf{k}}
\newcommand{\lb}{\mathbf{l}}
\newcommand{\jb}{\mathbf{j}}
\newcommand{\rb}{\mathbf{r}}
\newcommand{\vb}{\mathbf{v}}
\newcommand{\bb}{\mathbf{b}}
\newcommand{\cb}{\mathbf{c}}
\newcommand{\eb}{\mathbf{e}}
\newcommand{\Gb}{\mathbf{\Gamma}}
\newcommand{\aspect}{\kappa}
\newcommand{\xb}{\mathbf{x}}
\newcommand{\linco}{\boldsymbol{\mu}}
\newcommand{\omb}{\boldsymbol{\omega}}
\newcommand{\Z}{\mathbb{Z}}
\newcommand{\aspb}{{ \bm{\kappa}} }
\newcommand{\aspm}{{ K} }
\newcommand{\transp}{\mathsf{T}}
\newcommand\reallywidehat[1]{%
\savestack{\tmpbox}{\stretchto{%
  \scaleto{%
    \scalerel*[\widthof{\ensuremath{#1}}]{\kern-.6pt\bigwedge\kern-.6pt}%
    {\rule[-\textheight/2]{1ex}{\textheight}}
  }{\textheight}%
}{0.5ex}}%
\stackon[1pt]{#1}{\tmpbox}%
}
\newcommand{\crossmat}[1]
{\widehat{#1}}
\numberwithin{equation}{section}
\numberwithin{figure}{section}
\theoremstyle{plain}
          \newtheorem{theorem}{Theorem}[section]
          \newtheorem{proposition}{Proposition}[section]
          \newtheorem{lemma}[theorem]{Lemma}
          \theoremstyle{definition}
          \newtheorem{definition}[theorem]{Definition}
\newcommand{\rem}[1]{}
\begin{document}

\title{Stability Theory of the 3-Dimensional Euler Equations}


\author{Holger R.~Dullin$^{(1)}$ and Joachim Worthington$^{(1),(2)}$}
\address[foot]{ (1) School of Mathematics and Statistics, The University of Sydney, 
NSW, Australia\\ 
			(2) Cancer Research Division, Cancer Council NSW, NSW, Australia}

\begin{abstract}

The Euler equations on a three-dimensional periodic domain have a family of  shear flow steady states. We show that the linearised system around these steady states decomposes into subsystems equivalent to the linearisation of shear flows in a two-dimensional periodic domain. To do so, we derive a  formulation of the dynamics of the vorticity Fourier modes on a periodic domain and linearise around the shear flows. The linearised system has a decomposition analogous to the two-dimensional problem, which can be significantly simplified. By appealing to previous results it is shown that some subset of the shear flows are spectrally stable, and another subset are spectrally unstable.  For a dense set of parameter values the linearised operator has a nilpotent part, leading to linear instability. This is connected to the nonnormality of the linearised dynamics and the transition to turbulence.
Finally we show that all shear flows in the family considered (even the linearly stable ones) are parametrically unstable.

\keywords{Euler Equations \and Hamiltonian Dynamics \and Hydrodynamics}
\end{abstract}

\maketitle

\section{Introduction}

The Euler equations on a three-dimensional domain present many  unanswered questions, despite centuries of research, see, e.g., \cite{Gibbon08} for a 
recent review. Questions like the possibility of finite-time blowup which are known for the two-dimensional domain either do not hold or are open problems in the three-dimensional domain. 
In this paper, we study steady states of the Euler equations with velocity vector field $V$  and vorticity $\Omega$ of the form
\begin{equation} \label{eqn:OmegaStar}
\quad  V^* =\frac{2\Gb\times\pb}{|\pb|^2} \sin ( \pb\cdot \mathbf{x}  ),
\quad \Omega^*=2\Gb \cos (\pb \cdot \mathbf{x}  ), 
\quad \xb \in \mathbb{T}^3, \quad \pb \in \mathbb{Z}^3, \quad \Gb \in \mathbb{R}^3
\end{equation}
on the periodic domain $\mathbb{T}^3$. 
For a recent study of these so called  Kolmogorov flows see, e.g., \cite{Shebalin97}.
These are shear flows with a somewhat unusual direction relative to the periodic domain. 
The study of the stability of shear flows goes back to Rayleigh \cite{Rayleigh79},  who 
introduced the stability criterion named after hime. 
More recently improvements of such criteria were given in 
\cite{Morrison13} and \cite{Morrison98}.
Our methods are not as general and somewhat specific to the particular 
shear flows under consideration. 
We show that for values of $\pb$  satisfying $|\pb| > \sqrt{3} + 3/2$
these steady states are unstable. This is achieved by linearisation, decomposition, and reduction to an equivalent two-dimensional case.
For certain $\pb$ parallel to a unit vector linear stability is possible, but then the value of $\Gb$ becomes important.
Even though there are linearly stable shear flows in our family, they are parametrically unstable.
Parametric instability means that the equilibrium may be linearly stable, 
but can be made linearly unstable by an arbitrarily small perturbation of $\Gb$. 
The reason for the prevalence of parametric instability is that even though the set of $\Gb$ that leads to instability has zero measure, 
it is dense in the set of allowed $\Gb$ values. These results also hold for periodic domain with unequal side lengths.

As a particular example that highlights the fundamental difference between the two- and three dimensional domains consider 
the flow with velocity field $V^*(\xb) = \sin x (0, 1)^\transp$ for on a two-dimensional torus with size $\xb=(x,y)\in \left [ 0,{2\pi} \right ) \times \left [ 0,\frac{2\pi}{\aspect_y} \right )$ for $\aspect_y\in\mathbb{R}$ satisfying $\aspect_y>  1$. According to a classical result by 
Arnold \cite{Arnold78} this steady state is Lyapunov stable as shown by an energy-Casimir argument.
On a three-dimensional domain $\xb=(x,y,z)\in \left [ 0,{2\pi} \right ) \times \left [ 0,\frac{2\pi}{\aspect_y} \right )\times \left [ 0,\frac{2\pi}{\aspect_z} \right )$ for $\aspect_y,\aspect_z>1$, 
the spectrum of the linearisation about $V^*(\xb) = \sin x  (0, \varphi, 1)^\transp $ is on the imaginary axis for any $\varphi \in \mathbb{R}$, 
so like the system in two-dimensions it is spectrally stable.
However, for rational $\varphi$ the steady state is linearly unstable, because the linearised operator has  nilpotent blocks.
When $\varphi$ is irrational the steady state is linearly stable but parametrically unstable
since perturbing $\varphi$ into a nearby rational number  gives a linearly unstable steady state.
This shows that in three dimensions stability and instability are so intricately intertwined  
that the concept of linear stability does not capture the essence of the physical behaviour.
We show that the linear operator is highly nonnormal, which provides a mechanism for 
the nonlinear loss of stability by finite perturbations.

\section{The Euler Equations and Shear Flows}

We begin by deriving a formulation of the Euler equations in terms of the Fourier coefficients of the vorticity. This relies on the divergence-free property which restricts the values the vorticity can take. 
We then introduce three-dimensional parallel shear flows.
Linearising around these shear flows allows for a decomposition into classes (block-diagonalisation) as in two dimensions, 
even though the governing differential equations are significantly more complex than those for the two-dimensional equations (see e.g. \cite{Li00},\cite{Dullin19}).

\subsection{Vorticity Formulation of the three-dimensional Euler Equations}

\label{sec:vortform}

The Euler equations for an incompressible, inviscid flow on the domain $\mathcal{D}\subset \mathbb{R}^3$ are
\begin{equation}
\Omega=\nabla\times V 
\label{eq:omvrelation}
\end{equation}
\begin{equation}
\frac{\partial \Omega}{\partial t}=(\Omega \cdot \nabla)V-(V \cdot \nabla ) \Omega.
\label{eq:omvpde}
\end{equation}
 where $V(\xb,t):\mathcal{D}\times \mathbb{R} \to\mathbb{R}^3$  is the velocity field  and $\Omega(\xb,t):\mathcal{D}\times \mathbb{R} \to\mathbb{R}^3$ is the vorticity field.  See Constantin \cite{Constantin07} for a recent discussion of these equations.
The velocity and vorticity also satisfy the divergence free conditions
\begin{equation}
	\nabla \cdot V=0,\;\; \nabla \cdot \Omega=0.
\end{equation}
For the velocity, this is due to  the incompressibility. For the vorticity, this follows from the relationship $\Omega=\nabla\times V$ and the fact that the divergence of the curl must be zero.

We will consider these equations on the domain 
\begin{equation}
	\mathcal{D}=[ 0,{2\pi} )\times[ 0,{2\pi} )\times[ 0,{2\pi} )
\end{equation}
with periodic boundary conditions in every direction on $V$ and $\Omega$ so $V(0,y,z)=V(2\pi,y,z)$, etc.
This is the isotropic periodic domain with equal lengths in each dimension. In Section \ref{sec:aniso} we will extend this to a general three-dimensional rectangular periodic domain.

Expand $V$ and $\Omega$ into Fourier series. Define the Fourier coefficients with wavenumber $\jb\in\mathbb{Z}^3$ by 
\begin{equation}
	\vb_\jb(t)=\int_\mathcal{D} V(\xb,t)e^{-i\jb\cdot\xb}\mathrm{d}\xb,
\end{equation}
\begin{equation}
	\omb_\jb(t)=\int_\mathcal{D} \Omega(\xb,t)e^{-i\jb\cdot\xb}\mathrm{d}\xb
\end{equation}
where $\xb\in\mathcal{D}$ is the spatial variable.
Then
\begin{equation}
	\label{eq:3Dfourier}
	\begin{split}
	V=\sum_{\jb\in\mathbb{Z}^3} \vb_\jb(t)e^{i\jb\cdot\xb},\\
	\Omega=\sum_{\jb\in\mathbb{Z}^3} \omb_\jb(t)e^{i\jb\cdot\xb}.
	\end{split}
\end{equation}
As $V$ and $\Omega$ are real, $\vb_{-\jb}=\bar{\vb}_\jb$ and $\omb_{-\jb}=\bar{\omb}_\jb$. 

The divergence free conditions imply
\begin{equation}
	\label{eq:divfree}
	\jb\cdot\vb_\jb=0,\;\; \jb\cdot\omb_\jb=0.
\end{equation}
All dynamics occur in the \emph{divergence-free subspace}
\begin{equation}
\label{eq:divfreess}
	\{ \omb_\jb \; | \; \jb\cdot\omb_\jb=0 \}.
\end{equation}

We wish to write the system as a set of differential equations for the dynamics of the vorticity modes $\omb_\jb$ only. 
The condition $\Omega=\nabla \times V$ implies
\begin{equation}
\label{eq:omvv}
	\omb_\jb=i\jb\times \vb_\jb.
\end{equation}

Combining \eqref{eq:divfree} and \eqref{eq:omvv} implies
\begin{equation}
	\jb\times\omb_\jb=-i |\jb|^2\vb_\jb.
\end{equation}
Thus for all $\jb\neq \mathbf{0}$,
we can take the inverse to the curl on the divergence-free subspace
\begin{equation}
	\label{eq:velocitymodes}
	\vb_\jb=i\frac{\jb \times\omb_\jb}{|\jb|^2}.
\end{equation}
This allows us to invert the relationship \eqref{eq:omvrelation} and write formally $V=(\nabla\times)^{-1}\Omega$.
Now
\begin{equation} \begin{split}
	(\Omega\cdot\nabla)V & =\sum_{\jb,\kb\in\mathbb{Z}^3}i\jb\cdot\omb_\kb \vb_\jb e^{i(\jb+\kb)\cdot\xb} \\
		&=-\sum_{\jb,\kb\in\mathbb{Z}^3}  \jb\cdot\omb_{\kb}
		\frac{\jb\times\omb_\jb}{|\jb|^2}  e^{i(\jb+\kb)\cdot\xb}
\end{split}\end{equation} 
and 
\begin{equation} \begin{split}
	(V\cdot\nabla)\Omega& =\sum_{\jb,\kb\in\mathbb{Z}^3}i\jb\cdot\vb_{\kb}
		\omb_\jb e^{i(\jb+\kb)\cdot\xb } \\ 
		&=-\sum_{\jb,\kb\in\mathbb{Z}^3} \frac{1}{|\kb|^2}\jb\cdot(\kb\times\omb_\kb)
			\omb_\jb e^{i(\jb+\kb)\cdot\xb}.
\end{split}\end{equation} 

Rewriting the partial differential equation $\Omega_t=(\Omega \cdot \nabla)V-(V \cdot \nabla ) \Omega$ in Fourier space leads to infinitely many ordinary differential equations\footnote{See Appendix \ref{app:crosshat} for the definition and some properties of the cross product matrix $\widehat{\jb}$.}
\begin{equation} \begin{split}
\label{eq:3Dvorticityode}
	\dot{\omb}_\jb&
		=\sum_{\kb+\lb=\jb} \frac{1}{|\kb|^2}\left ( \lb\cdot(\kb\times \omb_{\kb}) \omb_{\lb}
		 -\kb\cdot\omb_\lb \kb \times \omb_\kb \right ) \\
		& =\sum_{\kb\in\mathbb{Z}^3\setminus\{\mathbf{0}\}} \left ( \omb_{\jb+\kb}[\kb\times\jb]^{\transp}
		- (\kb\cdot\omb_{\jb+\kb}) \crossmat{\kb}\right ) \frac{\omb_{-\kb}}{|\kb|^2}.
\end{split}\end{equation} 
This is a differential equation for the vorticity modes that does not depend on the velocity.
Defining
\begin{equation}
\label{eq:Aode}
	A(\jb,\kb,\xb):= \left (\xb  \left (\kb\times\jb   \right ) ^{\transp}
		 -(\kb\cdot\xb ) \crossmat{\kb}
		 \right ) 
\end{equation}
for $\jb,\kb,\xb\in\mathbb{R}^3$,
the differential equations can be written as
\begin{equation}
	\label{eq:3dode}
	\dot{\omb_\jb}=\sum_{\kb\in\mathbb{Z}^3\setminus \{ \mathbf{0}\}} A(\jb,\kb, \omb_{\jb+\kb}   )\frac{\omb_{-\kb}}{|\kb|^2}.
\end{equation}
It has been shown that this can be cast as a Poisson system \cite{Dullin18}.

\begin{lemma}
\label{lem:AProp}
For any orthogonal $3\times 3$ matrix $R\in SO(3)$, $A(R\jb, R\kb, R\xb) = R A(\jb,\kb,\xb) R^\transp$.
If $(\jb+\kb)\cdot \xb = 0$, then $\jb\cdot A(\jb,\kb,\xb) = \mathbf{0}$.
\end{lemma}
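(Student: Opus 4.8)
The statement has two parts, both of which I expect to follow by direct computation from the definition \eqref{eq:Aode}, $A(\jb,\kb,\xb)= \xb(\kb\times\jb)^\transp -(\kb\cdot\xb)\crossmat{\kb}$.

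\medskip
\noindent\emph{Part 1: equivariance under $SO(3)$.} The plan is to check that each of the two summands transforms correctly. For the first term, $R\xb\,(R\kb\times R\jb)^\transp$: since rotations preserve the cross product, $R\kb\times R\jb = R(\kb\times\jb)$, so this becomes $R\xb\,(\kb\times\jb)^\transp R^\transp = R\bigl(\xb(\kb\times\jb)^\transp\bigr)R^\transp$. For the second term, $(R\kb\cdot R\xb)\crossmat{R\kb}$: orthogonality gives $R\kb\cdot R\xb = \kb\cdot\xb$, and the standard identity for the cross-product matrix under rotations (see Appendix~\ref{app:crosshat}) gives $\crossmat{R\kb} = R\,\crossmat{\kb}\,R^\transp$. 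Assembling the two pieces yields $A(R\jb,R\kb,R\xb) = R\,A(\jb,\kb,\xb)\,R^\transp$.

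\medskip
\noindent\emph{Part 2: the annihilation property.} Here I would left-multiply $A(\jb,\kb,\xb)$ by $\jb^\transp$ and use the hypothesis $(\jb+\kb)\cdot\xb = 0$, i.e.\ $\jb\cdot\xb = -\kb\cdot\xb$. The first term contributes $(\jb\cdot\xb)(\kb\times\jb)^\transp = -(\kb\cdot\xb)(\kb\times\jb)^\transp$. The second term contributes $-(\kb\cdot\xb)\,\jb^\transp\crossmat{\kb} = -(\kb\cdot\xb)(\jb\times\kb)^\transp = (\kb\cdot\xb)(\kb\times\jb)^\transp$, using $\jb^\transp\crossmat{\kb} = (\kb\times\jb)^\transp$ (equivalently $\crossmat{\kb}^\transp\jb = \jb\times\kb$, which is immediate from the antisymmetry of $\crossmat{\kb}$ and $\crossmat{\kb}\vb = \kb\times\vb$). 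The two contributions cancel, giving $\jb\cdot A(\jb,\kb,\xb) = \mathbf{0}$.

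\medskip
\noindent There is no real obstacle here; the only things to be careful about are sign conventions in the cross-product matrix identities $\crossmat{R\kb} = R\crossmat{\kb}R^\transp$ and $\jb^\transp\crossmat{\kb} = (\kb\times\jb)^\transp$, which I would pull from Appendix~\ref{app:crosshat}. The annihilation property is exactly what makes \eqref{eq:3dode} preserve the divergence-free subspace \eqref{eq:divfreess}: if $\omb_{\jb+\kb}$ lies in that subspace then $(\jb+\kb)\cdot\omb_{\jb+\kb}=0$, so $\jb\cdot A(\jb,\kb,\omb_{\jb+\kb})\omb_{-\kb}/|\kb|^2 = 0$ termwise and hence $\jb\cdot\dot{\omb}_\jb = 0$.
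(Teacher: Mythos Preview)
Your proposal is correct and follows essentially the same route as the paper: for Part~1 you use $R\kb\times R\jb=R(\kb\times\jb)$, $R\kb\cdot R\xb=\kb\cdot\xb$, and $\crossmat{R\kb}=R\crossmat{\kb}R^\transp$ exactly as the paper does, and for Part~2 the paper likewise computes $\jb^\transp A(\jb,\kb,\xb)=(\jb\cdot\xb)(\kb\times\jb)^\transp-(\kb\cdot\xb)(\jb\times\kb)^\transp=\bigl((\jb+\kb)\cdot\xb\bigr)(\kb\times\jb)^\transp=0$. One tiny slip: your parenthetical claims $\jb^\transp\crossmat{\kb}=(\kb\times\jb)^\transp$, but the correct identity (which you actually use in the displayed computation) is $\jb^\transp\crossmat{\kb}=(\jb\times\kb)^\transp$; the sign is right in your calculation, just not in that aside.
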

\begin{proof}
By the properties of the cross product, $R\kb\times R\jb=R(\kb\times \jb)$. Also, $(R\kb)\cdot(R\xb)=\kb^\transp R^\transp R \xb = \kb\cdot\xb$. Finally, $\crossmat{R\kb}=R\crossmat{\kb}R^\transp$ (see Appendix \ref{app:crosshat}). Thus the equivariance follows. 

For the second property observe that
$\jb^\transp A(\jb, \kb, \xb) = (\jb\cdot \xb) ( \kb \times \jb)^\transp  - (\kb\cdot \xb) (\jb \times \kb)^\transp=(\jb+\kb)\cdot \xb (\kb\times\jb)^\transp = 0$. 
\end{proof}
The second property in Lemma~\ref{lem:AProp} corresponds to  the divergence free condition. 
For brevity in the following, we use the notation $A(\jb, \kb):= A(\jb, \kb, \omb_{\jb+\kb})$.

If $\jb=\mathbf{0}$, we observe that
\begin{equation}
\begin{split}
	\dot{\omb}_\mathbf{0}=&\frac{1}{2} i\sum_\kb \bigg [ \left ((\kb\cdot\omb_{-\kb}) \vb_{\kb}-(\kb\cdot\omb_{\kb}) \vb_{-\kb} \right )
	-\left ((\kb\cdot\vb_{-\kb}) \omb_{\kb}-(\kb\cdot\vb_{\kb}) \omb_{-\kb} \right ) \bigg ] 
	=0
\end{split}
\end{equation}
by the reality condition on the modes. Thus the mode $\omb_{\mathbf{0}}$ is constant and
moreover $\omb_{\mathbf{0}}=0$ by \eqref{eq:omvv}, and therefore can be omitted in the summation so there is no singularity in \eqref{eq:3Dvorticityode}.

It is straightforward to check that the dynamics governed by the differential equations \eqref{eq:3Dvorticityode} reduce to the two-dimensional equivalent derived in, e.g., \cite{DMW16} under the assumption that $\omb_\jb=(0,0,\omega_\jb)$ and $\jb=(j_1,j_2,0)$, i.e. ignoring all fluid velocity in the third dimension.

\subsection{Shear Flows}

The equations \eqref{eq:omvrelation}, \eqref{eq:omvpde} admit a family of steady states
\begin{equation}
\label{eq:generalss}
\begin{split}
	V^*&=\boldsymbol{\alpha} f( \pb\cdot\xb ),\\
	\Omega^*&=(\pb\times \boldsymbol{\alpha}) f'(\pb\cdot\xb)
\end{split}
\end{equation}
for any $2\pi$-periodic function $f:\mathbb{R}\to\mathbb{R}$, direction $\pb\in\mathbb{Z}^3$, and vector $\boldsymbol{\alpha}\in\mathbb{R}^3$ satisfying $ \pb\cdot\boldsymbol{\alpha}=0$ for the divergence free condition. 
Then $V=V^*$, $\Omega=\Omega^*$ satisfies \eqref{eq:omvrelation} and is a steady state for \eqref{eq:omvpde}. These shear flows are analogous to those that exist for the two-dimensional domain and are well-studied, see, e.g., \cite{Arnold98,Friedlander99,Li00,DMW16}.

Now consider \eqref{eq:generalss} in the particular case 
\begin{equation}
	f(x)=a\sin(x)+b\cos(x),\;\;\boldsymbol{\alpha}=\frac{2\Gb\times\pb}{|\pb|^2},\;\;\Gb\in\mathbb{R}^3.
\end{equation}
We can simplify this to consider $f(x)=\sin(x)$ by using the identity $a\sin (x)+b\cos (x)=\gamma \sin (x+x_0)$ for appropriate $\gamma$, $x_0$, followed by a rescaling.  
Then the steady state is
\begin{equation}
\begin{split}
	V^*&=\frac{2\Gb\times\pb}{|\pb|^2} \sin ( \pb\cdot \mathbf{x}  ),\\
	\Omega^*&=2\Gb \cos (\pb\cdot \mathbf{x}  ).
\end{split}
\label{eq:sinshear}
\end{equation}

In terms of vorticity Fourier coefficients, \eqref{eq:sinshear} is the equilibrium 
\begin{equation}
	\label{eq:3Dfourierequil}
	\omb^*_\kb=
	\begin{cases}
		\Gb \text{ if } \kb=\pm\pb; \\
		\mathbf{0}\text{ otherwise}
	\end{cases}
\end{equation}
of \eqref{eq:3dode}.
For the divergence-free condition on $\omb_\pb$ to be satisfied, $\Gb$ must  satisfy $\pb\cdot\Gb=0$. 

\subsection{Linearisation}

We now linearise the vector field  \eqref{eq:3dode} around \eqref{eq:3Dfourierequil} to study spectral and linear stability. Denote $\omb_\jb=((\omb_\jb)_x,(\omb_\jb)_y,(\omb_\jb)_z)^T$. We first calculate the Jacobian:
\begin{equation}
	\frac{\partial}{\partial \omb_\kb}\dot{\omb}_\jb =\frac{1}{|\kb|^2}A(\jb,-\kb)
	+\frac{\partial  }{\partial\omb_\kb}   A(\jb,\kb-\jb) \frac{\omb_{\jb-\kb}}{|\jb-\kb|^2}.
\end{equation}
where 
\begin{equation}
	\frac{\partial}{\partial \omb_\kb}\dot{\omb}_\jb=\begin{pmatrix}
		\frac{\partial}{\partial (\omb_\kb)_x} (\dot{\omb}_\jb)_x &
		\frac{\partial}{\partial (\omb_\kb)_x} (\dot{\omb}_\jb)_y &
		\frac{\partial}{\partial (\omb_\kb)_x} (\dot{\omb}_\jb)_z \\
		\frac{\partial}{\partial (\omb_\kb)_y} (\dot{\omb}_\jb)_x &
		\frac{\partial}{\partial (\omb_\kb)_y} (\dot{\omb}_\jb)_y &
		\frac{\partial}{\partial (\omb_\kb)_y} (\dot{\omb}_\jb)_z \\
		\frac{\partial}{\partial (\omb_\kb)_z} (\dot{\omb}_\jb)_x &
		\frac{\partial}{\partial (\omb_\kb)_z} (\dot{\omb}_\jb)_y &
		\frac{\partial}{\partial (\omb_\kb)_z} (\dot{\omb}_\jb)_z 
	\end{pmatrix}
\end{equation}
is the $(\jb,\kb)$ three-by-three block of the Jacobian.
		
Evaluating this at the equilibrium,
\begin{equation}
	\frac{\partial}{\partial \omb_\kb}\dot{\omb}_\jb|_{\Omega^*} =
	\begin{cases}
	\frac{1}{|\jb-\pb|^2} A(\jb,\pb-\jb)|_{\Omega^*}
	+\frac{1}{|\pb|^2}\frac{\partial}{\partial\omb_{\jb-\pb}}(A(\jb,-\pb){\Gb})\;\text{ if }\kb=\jb-\pb; \\
	\frac{1}{|\jb+\pb|^2}A(\jb,-\pb-\jb)|_{\Omega^*}
	+\frac{1}{|\pb|^2}\frac{\partial}{\partial\omb_{\jb+\pb}}(A(\jb,\pb)\Gb)\;\text{ if }\kb=\jb+\pb;\\
	0\;\text{ otherwise}.
	\end{cases}
\end{equation}

Then  the system linearised about $\Omega^*$  written in  $\linco_\jb = \omb_\jb-\omb_\jb^*$ is
\begin{equation}
\label{eq:linsystemCHI}
	\dot{\linco}_\jb=\chi_+(\jb+\pb,\pb,\Gb)\linco_{\jb+\pb}+\chi_-(\jb-\pb,\pb,\Gb)\linco_{\jb-\pb} \,.
\end{equation}
 where 
\begin{equation}
	\chi_\pm(\jb,\pb,\Gb)=\frac{1}{|\jb|^2} A(\jb\mp\pb,-\jb)|_{\Omega^*}	
			+\frac{1}{|\pb|^2}\frac{\partial}{\partial\omb_{\jb}}(A(\jb\mp\pb,\pm\pb){\Gb}).
\end{equation}
Keep in mind that $\pb$ and $\Gb$ are fixed by the choice of steady state.

Now
\begin{equation}
	A(\jb\pm\pb,-\jb)|_{\Omega^*}=\mp\Gb[\jb\times\pb]^{\transp} -(\jb\cdot\Gb) \crossmat{\jb} 
\end{equation} 
\begin{equation}
	\frac{\partial}{\partial\omb_{\jb}}(A(\jb\pm\pb,\mp\pb)\Gb)=
		\mp\Gb\cdot\left ( \pb\times\jb\right ) \mathbb{I}_3-\left [ \pb\times\Gb \right ]  \pb^{\transp} 
\end{equation} 
so $\chi_\pm$ 
can be written as
\begin{equation}
	\chi_\pm(\jb,\pb,\Gb)=\chi_a(\jb,\pb,\Gb) \pm \chi_b(\jb,\pb,\Gb)
\end{equation}
where the terms with constant sign are collected in
\begin{equation} 
\label{eq:chidef1}
	\chi_a(\jb,\pb,\Gb)=-\frac{1}{|\jb|^2} (\jb\cdot\Gb) \, \crossmat{\jb} 
	-\frac{1}{|\pb|^2} \left ( \pb\times\Gb \right ) \pb^{\transp}  
\end{equation} 
and the terms with alternating sign are collected in
\begin{equation}
\label{eq:chidef2}
	\chi_b(\jb,\pb,\Gb)=\frac{1}{|\jb|^2} \Gb(\jb\times\pb)^{\transp}
	+\frac{1}{|\pb|^2}  \Gb\cdot(\pb\times \jb)\mathbb{I}_3
\end{equation} 

\begin{lemma}  \label{lem:chi}
For any orthogonal $3\times 3$ matrix $R$ we have $\chi_\pm(R\jb, R\pb, R\Gb) = R \chi_\pm(\jb,\pb,\Gb) R^\transp$.
The divergence free condition $\jb\cdot \omb_\jb = 0$  for all $\jb \in \mathbb{Z}^3$ implies that $\jb^\transp \chi_\pm(\jb \pm \pb,\pb,\Gb) = \mathbf{0}$.
\end{lemma}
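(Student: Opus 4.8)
The plan is to obtain both assertions from the corresponding structural facts about $A$ in Lemma~\ref{lem:AProp}, since $\chi_\pm$ is assembled from $A$ in a transparent way. Unwinding the definitions, and using that at the equilibrium the third argument of $A(\jb\mp\pb,-\jb)$ equals $\omb^*_{\mp\pb}=\Gb$ while $A$ is linear in its third slot, one has $\chi_\pm(\jb,\pb,\Gb)=\tfrac{1}{|\jb|^2}A(\jb\mp\pb,-\jb,\Gb)+\tfrac{1}{|\pb|^2}M_\pm(\jb,\pb,\Gb)$, where $M_\pm(\jb,\pb,\Gb)$ is the constant matrix representing the linear map $\yb\mapsto A(\jb\mp\pb,\pm\pb,\yb)\,\Gb$ (this matrix is precisely the $\omb_\jb$-derivative appearing in the definition of $\chi_\pm$). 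Alternatively one can argue directly from the closed forms \eqref{eq:chidef1}--\eqref{eq:chidef2}.

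For the equivariance I would first record $|R\jb|=|\jb|$ and $|R\pb|=|\pb|$. The first summand transforms as required by applying Lemma~\ref{lem:AProp} to the triple $(\jb\mp\pb,-\jb,\Gb)$. For the second summand, Lemma~\ref{lem:AProp} gives $A\!\left(R(\jb\mp\pb),R(\pm\pb),R\yb\right)(R\Gb)=R\,A(\jb\mp\pb,\pm\pb,\yb)\,\Gb=R\,M_\pm(\jb,\pb,\Gb)\,\yb$; comparing with the defining relation $A\!\left(R(\jb\mp\pb),R(\pm\pb),\zb\right)(R\Gb)=M_\pm(R\jb,R\pb,R\Gb)\,\zb$ at $\zb=R\yb$ yields $M_\pm(R\jb,R\pb,R\Gb)=R\,M_\pm(\jb,\pb,\Gb)\,R^\transp$. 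Summing the two pieces gives $\chi_\pm(R\jb,R\pb,R\Gb)=R\,\chi_\pm(\jb,\pb,\Gb)\,R^\transp$. (The same conclusion follows by substituting $R\jb,R\pb,R\Gb$ into \eqref{eq:chidef1}--\eqref{eq:chidef2} and using $(R\ab)\cdot(R\bb)=\ab\cdot\bb$, $R\ab\times R\bb=R(\ab\times\bb)$, $\crossmat{R\ab}=R\,\crossmat{\ab}\,R^\transp$, and $(R\ab)(R\bb)^\transp=R\,\ab\bb^\transp\,R^\transp$; since each term of $\chi_\pm$ carries exactly one cross product, this is genuinely $SO(3)$-equivariance, matching Lemma~\ref{lem:AProp}.)

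For the divergence-free statement, substitute $\jb\pm\pb$ into the first argument of $\chi_\pm$: then $\jb\mp\pb$ collapses to $\jb$, so $\chi_\pm(\jb\pm\pb,\pb,\Gb)=\tfrac{1}{|\jb\pm\pb|^2}A(\jb,-(\jb\pm\pb),\Gb)+\tfrac{1}{|\pb|^2}M$ with $M\yb=A(\jb,\pm\pb,\yb)\,\Gb$. Multiply on the left by $\jb^\transp$. The first term vanishes: by the second property of Lemma~\ref{lem:AProp} (applied with $\kb=-(\jb\pm\pb)$, $\xb=\Gb$) one needs $\big(\jb-(\jb\pm\pb)\big)\cdot\Gb=(\mp\pb)\cdot\Gb=0$, which holds since $\pb\cdot\Gb=0$ on the equilibrium. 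For the second term, the identity $\jb^\transp A(\jb,\kb,\xb)=\big((\jb+\kb)\cdot\xb\big)(\kb\times\jb)^\transp$ from the proof of Lemma~\ref{lem:AProp} gives $\jb^\transp M\yb=\big((\jb\pm\pb)\cdot\yb\big)\big((\pm\pb\times\jb)\cdot\Gb\big)$, so that $\jb^\transp\chi_\pm(\jb\pm\pb,\pb,\Gb)=\pm\tfrac{\Gb\cdot(\pb\times\jb)}{|\pb|^2}\,(\jb\pm\pb)^\transp$. This row vector is a scalar multiple of $(\jb\pm\pb)^\transp$, hence annihilates every $\linco_{\jb\pm\pb}$ obeying the divergence-free relation $(\jb\pm\pb)\cdot\linco_{\jb\pm\pb}=0$; therefore, on the divergence-free subspace, $\jb^\transp\dot{\linco}_\jb=0$, which is the claim and shows that the divergence-free subspace is invariant under \eqref{eq:linsystemCHI}.

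Every computation here is routine, so the only real work is careful bookkeeping. The point I would be most careful about is the interpretation of the second assertion: the left annihilator $\jb^\transp\chi_\pm(\jb\pm\pb,\pb,\Gb)$ is not literally the zero row vector but a multiple of $(\jb\pm\pb)^\transp$, and this is harmless exactly because $(\jb\pm\pb)^\transp$ is the direction removed by the divergence-free condition on $\linco_{\jb\pm\pb}$ (equivalently, it is enough to know the identity modulo the span of $\jb\pm\pb$). The equivariance part presents no obstacle beyond tracking the single cross product in each term, which is also why the correct invariance group is $SO(3)$ rather than all of $O(3)$.
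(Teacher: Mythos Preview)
Your approach matches the paper's: both obtain the equivariance and the divergence-free property of $\chi_\pm$ by inheriting them from the corresponding properties of $A$ in Lemma~\ref{lem:AProp}, with the derivative term handled separately. Your treatment is in fact more detailed than the paper's terse proof, and your careful reading of the second assertion --- that $\jb^\transp\chi_\pm(\jb\pm\pb,\pb,\Gb)$ is proportional to $(\jb\pm\pb)^\transp$ rather than literally zero, so the claim holds on the divergence-free subspace --- is correct and clarifies a point the paper leaves implicit.
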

\begin{proof}
These properties are inherited from $A(\cdot,\cdot,\cdot)$, when evaluated at the equilibrium, 
where $A( \jb \pm \pb, -\jb, \omega_{\pm \pb}) = A(\jb \pm \pb, -\jb, \Gb)$.
Moreover, by direct calculation one can verify that $( \partial A(\jb \pm \pb, -\jb, \Gb) / \partial \Gb ) \Gb$
has the same equivariance property. The divergence free property follows directly from that of $A(\cdot,\cdot,\cdot)$.
\end{proof}

\subsection{Class Decomposition of the Linearised System}
\label{sec:classdecomp}

\begin{figure}
\includegraphics[width=0.6\textwidth]{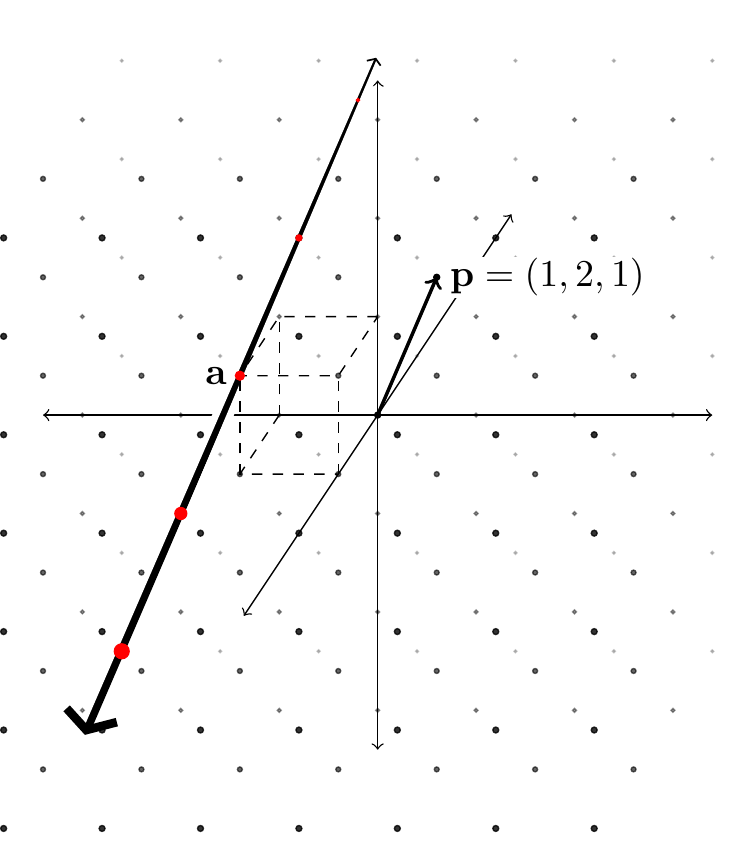}
\caption[The class decomposition of the linearised three-dimensional Euler equations.]{The class decomposition into subsystems for the linearised three-dimensional Euler equations. In this setting, $\pb \in \Z^3$. The lattice points on the line parallel to $\pb$ passing through $\ab$ all belong to one class of modes. The linearised dynamics of the modes in a class only depend on other modes in the same class. For this figure, $\pb=(1,2,1)$ and the class led by $\ab=(-1,1,1)$ is shown.}
\label{fig:3Ddecomp}
\end{figure}

The differential equations \eqref{eq:linsystemCHI} have a clear class decomposition; the dynamics of  mode $\linco_\ab$ depend only on the modes $\linco_{\ab\pm\pb}$, which in turn depend only on $\linco_\ab$ and $\linco_{\ab\pm 2\pb}$, and so on. Thus the modes $\ab+n\pb$ for $n\in \mathbb{Z}$ are a decoupled subsystem for any starting mode number $\ab\in\mathbb{Z}^3$. This is illustrated in Figure \ref{fig:3Ddecomp}, and works much the same as in the two-dimensional problem (see, e.g., Li \cite{Li00}). 

In the linearised system, any mode $\linco_{q\pb}$ for $q\in\mathbb{Q}$ and $q\pb \in \mathbb{Z}^3$ is constant. 
For these modes
\begin{equation}
	\chi_+(q\pb,\pb,\Gb)=\chi_-(q\pb,\pb,\Gb)=-\frac{1}{|\pb|^2} \left ( \pb \times \Gb \right ) \pb^{\transp} .
\end{equation}
So the dynamics are given by 
\begin{equation}
	\dot{\linco}_{q\pb}=-\frac{1}{|\pb|^2}\left ( \pb \times \Gb \right )   [\pb\cdot \linco_{(q+1)\pb}
	+\pb\cdot \linco_{(q-1)\pb}].
\end{equation}
But $\pb\cdot \linco_{m\pb}=\frac{1}{m}(m\pb)\cdot \linco_{m\pb}=0$ by the divergence-free condition, and thus $\dot{\linco}_{q\pb}=0$ for all $q\in\mathbb{Q}$. 

We define the principal domain of mode numbers
\begin{equation}
\label{eq:ADomain3D}
	\mathcal{A}=\{ \ab \in \mathbb{Z}^3 \; | \;
		-\frac{1}{2}|\pb|^2 < \ab\cdot\pb \leq \frac{1}{2} |\pb|^2 \}.
\end{equation}
Then for all $\jb\in\mathbb{Z}^3$, $\jb=\ab+n\pb$ for a \emph{unique} $\ab\in \mathcal{A}$ and $n\in \mathbb{Z}$.

Now when studying the linearised system, we can study each particular class individually. We introduce the following notation: for a fixed $\ab\in \mathcal{A}$, 
and  $\pb, \Gb$ fixed by the equilibrium define
\begin{equation}
	\linco_n:=\linco_{\ab+n\pb},\;\;\chi_\pm(n):=\chi_\pm(\ab+n\pb, \pb, \Gb).
\end{equation}
We have reused the variables $\linco$ and $\chi_i$; the appropriate definition depends on whether we consider the variables a function of an integer $n$ or a vector $\vb$. In the case of an integer we are considering the dynamics of a single class with fixed $\pb$, $\Gb$, $\ab$. 

Then the differential equations for a single class are
\begin{equation}
\label{eq:classODEs3D}
	\dot{\linco}_n=\chi_+(n+1)\linco_{n+1}+\chi_-(n-1)\linco_{n-1}.
\end{equation}

\section{Simplifying the Class Dynamics}

\label{sec:simplifying1}

In the previous section, it was shown that the linearised Euler equations around a shear flow decouple into ``classes'' of modes.
This can be thought of as a block-diagonalisation of the linear operator where each class represents a block, and there are
infinitely many such blocks, as many as there are lattice points in the principle domain $\mathcal{A}$ define in \eqref{eq:ADomain3D}.
In this section, we show that the dynamics of the modes in a class decompose into a subsystem with simple dynamics and another subsystem isomorphic to the dynamics of a corresponding class in the two-dimensional problem. To do so, we transform $\pb$ to a unit vector parallel to the $x$-axis, use the divergence-free property to simplify $\Gb$, and project down to the divergence-free subspace. The dynamics of the resulting system split into a part equivalent to the corresponding two-dimensional problem and secondary dynamics. 
There are exceptional cases when this reduction to the two-dimensional problem is not possible. 
This occurs when $\pb$, $\Gb$ and  $\ab$ are coplanar, and  leads to instability. 
This kind of instability is a truly three-dimensional effect, and is discussed in Section \ref{sec:linclasses}.

\subsection{Simplifying the parameters}
\label{sec:3Drescaling}

As we have shown in Lemma \ref{lem:chi} the matrices $\chi_\pm(n)$ are equivariant 
under rotations. Hence by transforming the variables $\linco_n$ with a fixed rotation 
the matrices $\chi_\pm(n)$ may be simplified. 

\begin{lemma} \label{lem:redpar}
There exists an orthonormal basis $\mathbf{e}_x$, $\mathbf{e}_y$, $\mathbf{e}_z$ in which
\begin{equation}
    \pb = (p, 0, 0), \quad 
    \ab =p(\tilde{a}_x, \tilde{a}_y, 0), \quad
    \Gb = (0, \Gamma\cos\theta, \Gamma\sin\theta)
\end{equation}
where 
\begin{equation}
  \label{eq:3dparam}
  	p=|\pb|,\quad \tilde{a}_x=\frac{ \ab \cdot \pb }{\pb\cdot \pb},\quad  \tilde{a}_y=\frac{|\ab\times\pb|}{\pb\cdot\pb},\quad \Gamma=|\Gb|,
  	\end{equation}
\end{lemma}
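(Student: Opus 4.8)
The plan is to construct the required orthonormal basis explicitly by a Gram--Schmidt procedure applied to the pair $\pb,\ab$, and then (in the subsequent use of the lemma) to invoke the rotational equivariance of $\chi_\pm$ from Lemma~\ref{lem:chi} to pass to the new coordinates. Concretely, I would set $\mathbf{e}_x := \pb/|\pb|$, so that $\pb = (p,0,0)$ with $p=|\pb|$ by construction. Next, decompose $\ab = (\ab\cdot\mathbf{e}_x)\mathbf{e}_x + \ab_\perp$ into its parts parallel and perpendicular to $\pb$; when $\ab_\perp\neq\mathbf{0}$ put $\mathbf{e}_y := \ab_\perp/|\ab_\perp|$, and otherwise choose $\mathbf{e}_y$ to be an arbitrary unit vector orthogonal to $\mathbf{e}_x$. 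Finally set $\mathbf{e}_z := \mathbf{e}_x\times\mathbf{e}_y$, so that $(\mathbf{e}_x,\mathbf{e}_y,\mathbf{e}_z)$ is a right-handed orthonormal basis and the change of coordinates is given by some $R\in SO(3)$.

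It then remains to read off the components, which is routine. For $\ab$: by construction $\ab\cdot\mathbf{e}_z = 0$ (since $\mathbf{e}_z\perp\operatorname{span}(\pb,\ab)$ when $\ab_\perp\neq\mathbf{0}$, and $\ab\in\mathbb{R}\mathbf{e}_x$ otherwise), while $\ab\cdot\mathbf{e}_x = (\ab\cdot\pb)/|\pb| = p\,\tilde a_x$ and $\ab\cdot\mathbf{e}_y = |\ab_\perp| = |\ab\times\pb|/|\pb| = p\,\tilde a_y$, where the last identity uses $|\ab\times\pb|^2 = |\ab|^2|\pb|^2 - (\ab\cdot\pb)^2 = |\pb|^2|\ab_\perp|^2$; note that the choice of orientation of $\mathbf{e}_y$ along $\ab_\perp$ is what makes $\ab\cdot\mathbf{e}_y$ match the manifestly nonnegative quantity $\tilde a_y$. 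For $\Gb$: the divergence-free condition $\pb\cdot\Gb = 0$ noted after \eqref{eq:3Dfourierequil} (since $\omb_\pb^* = \Gb$) gives $\Gb\cdot\mathbf{e}_x = 0$, so $\Gb$ lies in $\operatorname{span}(\mathbf{e}_y,\mathbf{e}_z)$; writing $\Gamma := |\Gb|$ and letting $\theta$ be the polar angle of $\Gb$ in that plane yields $\Gb = (0,\Gamma\cos\theta,\Gamma\sin\theta)$. Comparing with \eqref{eq:3dparam} confirms the stated expressions for $p$, $\tilde a_x$, $\tilde a_y$, $\Gamma$.

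There is essentially no obstacle, as the content is linear algebra; the only point needing a word of care is the degenerate case $\ab\parallel\pb$ (equivalently $\ab\times\pb=\mathbf{0}$, i.e.\ the case $\ab = q\pb$ that is singled out later), in which $\mathbf{e}_y$ is not pinned down by $\ab$. There one simply observes $\tilde a_y = 0$, so any orthonormal completion of $\mathbf{e}_x$ works, and $\Gb$ still lies in the $\mathbf{e}_y$--$\mathbf{e}_z$ plane by the same divergence-free argument. A minor remark worth including is that since $\chi_\pm$ is equivariant under every orthogonal $R$ by Lemma~\ref{lem:chi}, right-handedness of the basis is not strictly needed, but taking $\mathbf{e}_z = \mathbf{e}_x\times\mathbf{e}_y$ is the natural normalization and ensures $R\in SO(3)$.
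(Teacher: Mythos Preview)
Your proposal is correct and essentially identical to the paper's proof: both perform Gram--Schmidt on the pair $(\pb,\ab)$, taking $\mathbf{e}_x$ along $\pb$, $\mathbf{e}_y$ along the component of $\ab$ orthogonal to $\pb$, and $\mathbf{e}_z$ along $\pb\times\ab$ (your $\mathbf{e}_x\times\mathbf{e}_y$ is the same unit vector), and then read off the components using $\pb\cdot\Gb=0$. The only difference is cosmetic: you treat the degenerate case $\ab\parallel\pb$ explicitly, whereas the paper simply excludes it by remarking that such classes have trivial dynamics.
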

and $\theta\in[0,2\pi)$ is determined by 
\begin{equation}
	\cos\theta=\frac{(\Gb\cdot \ab)|\pb|}{\Gamma|\pb\times\ab|},\quad \sin\theta=\frac{\Gb\cdot (\pb\times\ab)}{\Gamma|\pb\times\ab|}.
\end{equation}

\begin{proof}

\begin{figure}
\includegraphics[width=0.4\textwidth]{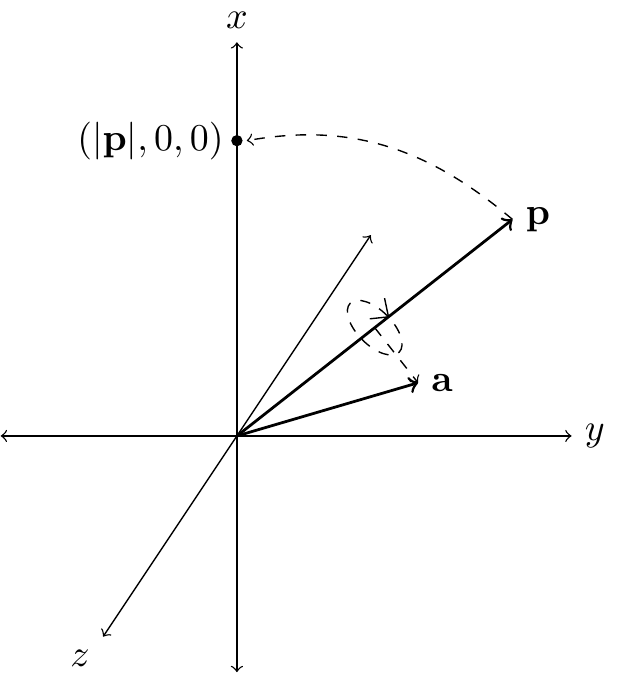}
\includegraphics[width=0.4\textwidth]{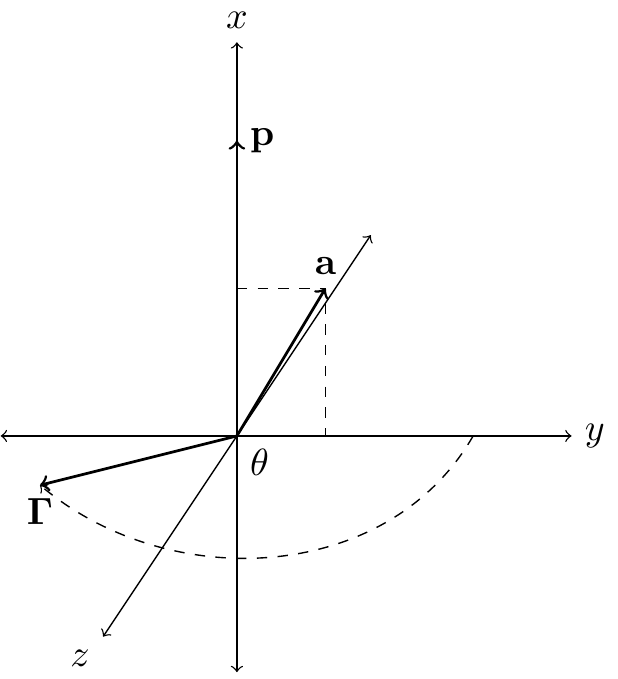}
\caption[Coordinate transformations for the linearised classes in the three-dimensional Euler equations.]{Transformations for the linearised classes. The dynamics for the linearised classes  \eqref{eq:classODEs3D} can be simplified significantly by a change of coordinates. First, the vector $\pb$ is rotated to the $x$ axis. An additional rotation around the $x$-axis aligns $\ab$ with the $x$-$y$ plane. As $\Gb$ is perpendicular to $\pb$, it lies in the $y$-$z$ plane, and we can describe it by its length $|\Gb|$ and angle to the positive $y$-axis $\theta$. The dynamics are invariant under these transformations, so we can consider only classes with such parameters. }
\label{fig:linsimp}
\end{figure}

Consider the vectors 
\begin{equation} \mathbf{v}_x=\pb,\quad \vb_y=\ab-\frac{\ab\cdot \pb}{\pb\cdot \pb}\pb,\quad\vb_z=\pb\times\ab. \end{equation}
These are well-defined and nonzero, as $\pb \neq \mathbf{0}$ and $\pb$ and $\ab$ are not parallel for classes with non-trivial dynamics, by Section \ref{sec:classdecomp}. If $\eb_i=\frac{\vb_i}{|\vb_i|}$ for $i=x,y,z$, $\eb_x,\eb_y,\eb_z$ form an orthonormal basis. These have been chosen so that $\pb$ can be expressed in terms of $\eb_x$ only, and $\ab$ can be expressed in terms of $\eb_x$ and $\eb_y$ only.

We  calculate that $|\vb_x|=|\pb|=p$, $|\vb_y|=\frac{|\pb\times\ab|}{|\pb|}$, and $|\vb_z|=|\pb\times\ab|$. Then $\pb=|\pb|\eb_x$ and $\ab=p\tilde{a}_x\eb_x+p\tilde{a}_y\eb_y$, where $\tilde{a}_x,\tilde{a}_y$ are defined above. Geometrically, $\tilde{a}_x$ is the projection of $\ab$ in the direction of $\pb$ scaled by $|\pb|$, and $\tilde{a}_y$ is the remaining orthogonal component. As $\Gb\cdot \pb=0$, we can write $\Gb=\Gamma(\cos\theta \eb_y+\sin\theta \eb_z)$ where $\theta\in[0,2\pi)$ is determined by the equations given above. 

This change of basis does not alter the dynamics given by the matrices $\chi_\pm(n)$ as we are transforming between orthonormal bases, and $\chi_{\pm}(n)$ are equivariant under rotations by Lemma \ref{lem:chi}.
\end{proof}

Note the exceptional case for $\theta$ when $\Gb \cdot ( \pb \times \ab)=0$. Then $\theta=0$ or $\theta=\pi$.  Geometrically, this is the case where 
$\Gb$, $\pb$ and $\ab$ are coplanar, so that the vector triple product vanishes. 
For the reduction to the two-dimensional case we need to require $\theta \not = 0, \pi$.
Classes with $\theta=0,\pi$ have qualitatively different dynamics to all other classes, which will be discussed in Section \ref{sec:linclasses}.

We now consider each class as a problem with parameters $p,\tilde{a}_x,\tilde{a}_y,\theta\in\mathbb{R}$. We shall see that $p$, $\Gamma$ and $\theta$ have a limited impact on the dynamics of the class, and can map parameter values $(\tilde{a}_x,\tilde{a}_y)$ to parameters of a corresponding class in the linearised two-dimensional Euler equations.

In these reduced parameters there is quite some simplification in the expression for $\chi_\pm$. 
Writing 
\begin{equation}
\alpha_n=\frac{|\ab+n\pb|}{|\pb|}=\sqrt{(\tilde{a}_x+n)^2+\tilde{a}_y^2}
\end{equation}
 we can write
\begin{equation}
\begin{split}
&\chi_a(n)=
\begin{pmatrix}
	0 & 0 & 0 \\
	1 & 0 & 0 \\
	0 & 0 & 0
\end{pmatrix}\Gamma\sin\theta 
+\begin{pmatrix}
			0 & 0 & -\tilde{a}_y^2 \\
		0 & 0 & (\tilde{a}_x+n) \tilde{a}_y \\		
		-(\tilde{a}_x+n)^2 & - (\tilde{a}_x+n) \tilde{a}_y & 0 \end{pmatrix} \frac{1}{\alpha_n^2}\Gamma \cos\theta,	
		\end{split}
\end{equation}
\begin{equation}
	\begin{split}
	\chi_b(n)&=\begin{pmatrix} 1 & 0 & 0 \\
		0 & 1 & 0 \\
		0 & 0 &  1-\frac{1}{\alpha_n^2}  
				   \end{pmatrix}  \tilde{a}_y \Gamma \sin\theta 
+\begin{pmatrix} 0 & 0 & 0 \\
		 0 & 0 & -1 \\
		0 & 0 & 0
				   \end{pmatrix} \frac{ \tilde{a}_y}{\alpha_n^2} \Gamma  \cos\theta\,.
	\end{split}
\end{equation}
Note that this is independent of $p$, and the $\Gamma$ dependence is purely multiplicative. 

\subsection{The Anisotropic Periodic Domain}

\label{sec:aniso}

The previous sections assumed the periodic domain was isotropic for simplicity, so the length of the domain is the same in all three dimensions. We can extend this to the anisotropic periodic domain with size 
\begin{equation}
	\mathcal{D}=\left [ 0,\sfrac{2\pi}{\aspect_x} \right ) \times \left [ 0,\sfrac{2\pi}{\aspect_y} \right ) \times \left [ 0,\sfrac{2\pi}{\aspect_z} \right )
\end{equation}
for $\aspect_x,\aspect_y,\aspect_z\in\mathbb{R}^+$. Write $\aspb:=(\aspect_x,\aspect_y,\aspect_z)$ and define the matrix
\begin{equation}
\aspm:=\begin{pmatrix} \aspect_x & 0 & 0 \\
		0 & \aspect_y & 0 \\
		0 & 0 & \aspect_z 
		\end{pmatrix}.
	\end{equation} 
	
The simplest way to describe the effect of different aspect ratios of the domain size is 
to pass from an integer lattice $\Z^3$ for the wavenumbers to the lattice $\mathcal{L} = \aspect_x \Z \times \aspect_y \Z \times \aspect_z \Z$.

In particular this applies to $\pb$, $\ab$, $\jb$, $\kb$, but not to $\omb$, $\Gb$.

\subsection{Projecting to the divergence-free subspace}

We know that the divergence free subspace \eqref{eq:divfreess} is invariant, so that for initial conditions satisfying $\jb \cdot \omb_{\jb} = 0$ for all $\jb$ this condition holds for all times. 
This is true in both the full dynamics and  in the linearised dynamics.
As the divergence-free subspace is invariant, we project down to this space to find the true dynamics of our system by disallowing perturbations off this subspace.  This projection is achieved by introducing a rotation on the coordinates $\mu_{n,z}$ (where $\linco_n=(\mu_{n,x},\mu_{n,y},\mu_{n,z})$)
which will depend on $n$ and simplify the divergence-free condition.

Let $\rb_x(n)=\frac{\ab+n\pb}{|\ab+n\pb|}=\frac{\tilde{a}_x+n}{\alpha_n}\eb_x+\frac{\tilde{a}_y}{\alpha_n}\eb_y$. This is well defined as $\alpha_n\neq 0$ for $\ab,\pb$ not parallel. Then $\rb_x(n)$ is a unit vector and we can rewrite the divergence-free condition as $\rb_x(n)\cdot \linco_{\ab+n\pb}=0$. Let $\rb_z(n)=\mathbf{e}_z$, and $\rb_y(n)=\rb_x(n)\times\rb_z(n)=\frac{-\tilde{a}_y}{\alpha_n}\eb_x+\frac{\tilde{a}_x+n}{\alpha_n}\eb_y$. Then define 
\begin{equation}
	R(n)=(\rb_x(n),\;\rb_y(n),\;\rb_z(n)).
\end{equation}
It is clear that $R(n)$ is a rotation matrix $R(n)\in SO(3)$ for all $n\in\mathbb{Z}$.

Define $\bar{\linco}_n:=R(n)^{\transp}\linco_n$ and write $\bar{\linco}=(\bar{\mu}_{n,x},\bar{\mu}_{n,y}\bar{\mu}_{n,z})$, so $\bar{\mu}_{n,i}=\rb_{i}\cdot \linco_n$ for $i=x,y,z$.
Then the divergence-free invariance implies that for initial conditions on the divergence-free subspace $\bar{\mu}_{n,x}=\rb_x(n)\cdot \linco_n=0$. 
We thus need only consider the dynamics 
of  $\bar{\mu}_{n,y}$ and $\bar{\mu}_{n,z}$.

\begin{lemma}
Let $S_{\pm}(n)= R(n\mp 1)^{\transp}\chi_\pm(n)R(n)$. Then $S_{\pm}(n)= \begin{pmatrix} *  & 0 \\ * &  \tilde \chi_\pm(n) \end{pmatrix}$
where 
\begin{equation}
		\tilde\chi_\pm(n)=\frac{\Gamma\tilde{a}_y}{\alpha_n^2}\Bigg [
			\begin{pmatrix}
			\pm \alpha_n \alpha_{n\mp1} & 0 \\
			0 & \pm(\alpha_n^2-1)
			\end{pmatrix}\sin\theta
			+
			\begin{pmatrix}
			0 & \alpha_{n\mp1} \\
			0 & 0
			\end{pmatrix}\cos\theta \Bigg ]		
		\,.
\end{equation} 

\end{lemma}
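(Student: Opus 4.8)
The plan is to compute $S_\pm(n) = R(n\mp1)^\transp \chi_\pm(n) R(n)$ directly, using the explicit $3\times 3$ representations of $\chi_a(n)$ and $\chi_b(n)$ in the reduced basis of Lemma~\ref{lem:redpar}, together with the explicit form of $R(n) = (\rb_x(n), \rb_y(n), \rb_z(n))$. Recall $\chi_\pm(n) = \chi_a(n) \pm \chi_b(n)$, so it suffices to conjugate $\chi_a$ and $\chi_b$ separately and recombine. Since $\rb_x(n)$ and $\rb_y(n)$ are unit vectors in the $\eb_x$-$\eb_y$ plane obtained by rotating $\eb_x,\eb_y$ by the angle with cosine $(\tilde a_x+n)/\alpha_n$ and sine $\tilde a_y/\alpha_n$, and $\rb_z(n)=\eb_z$, the matrix $R(n)$ is block-diagonal: a planar rotation in the $x$-$y$ block and $1$ in the $z$ entry. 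This makes the conjugation a bookkeeping exercise in $2\times 2$ rotations plus scalar factors.

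First I would establish the zero-pattern claim: the first column of $S_\pm(n)$ below the $(1,1)$ entry must vanish. This is exactly the divergence-free statement. By Lemma~\ref{lem:chi}, $\jb^\transp \chi_\pm(\jb\pm\pb,\pb,\Gb)=\mathbf 0$, i.e. $(\ab+(n\mp1)\pb)^\transp \chi_\pm(n) = \mathbf 0$ — wait, one must be careful which sign of the shift appears; the bookkeeping is that $\chi_\pm(n)$ multiplies $\linco_{n}$ in the equation for $\dot\linco_{n\mp1}$, so the relevant divergence-free vector on the left is $\rb_x(n\mp1)$. Thus $\rb_x(n\mp1)^\transp \chi_\pm(n) = \mathbf 0$, which says precisely that the top row of $R(n\mp1)^\transp \chi_\pm(n)$, hence the top row of $S_\pm(n)R(n)^\transp$ — no: the top row of $R(n\mp1)^\transp\chi_\pm(n)R(n)$ — is zero. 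That gives $S_\pm(n) = \left(\begin{smallmatrix} 0 & 0 & 0 \\ * & * & * \\ * & * & * \end{smallmatrix}\right)$. Then I would separately check that the $(2,3)$-reduced block has the stated upper-triangular-plus-diagonal shape, i.e. the $\sin\theta$ part is diagonal and the $\cos\theta$ part is strictly upper triangular; for this one just reads off the conjugated $\chi_a,\chi_b$.

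The core computation is then the lower-right $2\times 2$ block $\tilde\chi_\pm(n)$. I would (i) conjugate the $\sin\theta$ pieces of $\chi_a$ and $\chi_b$: $\chi_a$ contributes $\Gamma\sin\theta$ times the elementary matrix $E_{21}$, and $\chi_b$ contributes $\Gamma\sin\theta\,\tilde a_y$ times $\mathrm{diag}(1,1,1-\alpha_n^{-2})$; after conjugating by the planar rotations $R(n\mp1),R(n)$ and extracting the $y$-$z$ sub-block, the diagonal piece stays essentially diagonal and the $E_{21}$ piece, which mixes the $x$ and $y$ axes, combines with the rotation angles to produce the factors $\alpha_n\alpha_{n\mp1}$ and $(\alpha_n^2-1)$ — the identity $(\tilde a_x+n)(\tilde a_x + n\mp1) + \tilde a_y^2 = \alpha_n\alpha_{n\mp1}\cos(\text{angle difference})$ type relation, or more simply $\rb_x(n\mp1)\cdot(\text{image of }\eb_x\text{ or }\eb_y)$, does the work; (ii) conjugate the $\cos\theta$ pieces similarly, where $\chi_a$'s $\cos\theta$ block and $\chi_b$'s $\cos\theta$ block (a multiple of $E_{23}$) combine to leave only the $(1,2)$ entry $\alpha_{n\mp1}$ in the reduced block. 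The $\pm$ signs track through because $\chi_\pm = \chi_a \pm \chi_b$ and because swapping $n\mp1$ flips certain rotation-angle sines.

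The main obstacle I anticipate is purely organizational rather than deep: getting the shift conventions ($n$ versus $n\mp1$, and which of $\chi_+,\chi_-$ carries which shift) consistent with the linearised equation~\eqref{eq:classODEs3D} and with the direction of conjugation in $S_\pm(n)=R(n\mp1)^\transp\chi_\pm(n)R(n)$, and then patiently multiplying out two planar-rotation conjugations of each of the four constituent matrices, simplifying with $\alpha_n^2 = (\tilde a_x+n)^2 + \tilde a_y^2$ and the cross-term identity relating $\alpha_n,\alpha_{n\mp1}$ and the rotation angles. No genuinely new idea is needed beyond Lemmas~\ref{lem:chi} and~\ref{lem:redpar} and the block structure of $R(n)$; the risk is a sign error or a dropped factor of $\alpha$, so I would sanity-check the final $\tilde\chi_\pm(n)$ against the $\theta=\pi/2$ case (pure $\sin\theta$), where the reduced dynamics should visibly match a two-dimensional shear-flow class, and against the $\tilde a_y\to$ large / $\alpha_n\to\infty$ asymptotics.
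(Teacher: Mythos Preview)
Your approach is essentially the paper's: direct computation of the entries $(S_\pm(n))_{i,j} = \rb_i(n\mp1)\cdot\chi_\pm(n)\,\rb_j(n)$, using the explicit reduced-basis forms of $\chi_a,\chi_b$ and the planar-rotation structure of $R(n)$. The paper simply writes out the four entries of $\tilde\chi_\pm(n)$ and the two top-row entries $(S_\pm)_{x,y},(S_\pm)_{x,z}$ and verifies them.

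There is one genuine slip in your zero-pattern argument. You invoke Lemma~\ref{lem:chi} to conclude that the row vector $\rb_x(n\mp1)^\transp\chi_\pm(n)$ is identically $\mathbf 0$, and hence that the entire first row of $S_\pm(n)$ vanishes. That is not what Lemma~\ref{lem:chi} gives: the divergence-free statement there means that $\jb^\transp\chi_\pm(\jb\pm\pb,\pb,\Gb)$ annihilates vectors in the divergence-free subspace, not that it vanishes as a covector. Concretely, a short calculation (combining the $\chi_a$ and $\chi_b$ pieces with $\pb\cdot\Gb=0$) shows that $(\ab+(n\mp1)\pb)^\transp\chi_\pm(n)$ is a generally nonzero scalar multiple of $(\ab+n\pb)^\transp$. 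This still yields $(S_\pm)_{x,y}=(S_\pm)_{x,z}=0$, because $\rb_y(n)$ and $\rb_z(n)$ are orthogonal to $\ab+n\pb$; but $(S_\pm)_{x,x}$ is the nonzero constant the paper records as $\mathcal{C}_\pm$. Since the lemma only asserts $*$ in the $(1,1)$ slot your conclusion survives, but the reasoning needs this correction; otherwise you would also falsely conclude that the divergence-free direction $\bar\mu_{n,x}$ has trivial linear dynamics, contradicting the paper's remark that $|\mathcal C_\pm|$ may exceed $1$. The computation of the $2\times2$ block $\tilde\chi_\pm(n)$ is unaffected and proceeds exactly as you outline.
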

\begin{proof}
The $i$-$j$ entry of $S_\pm(n)$ is
\begin{equation}
    (S_{\pm}(n))_{i,j} = \rb_i(n\mp 1) \cdot \chi_\pm(n) \rb_j(n)
\end{equation}
for $i,j\in \{x,y,z\}$.
We can explicitly calculate $(S_{\pm}(n))_{x,y}=(S_{\pm}(n))_{x,z}=0$. 
Thus $\frac{\mathrm{d}}{\mathrm{d}t}\bar{\mu}_{n,x}=\mathcal{C}_\pm \bar{\mu}_{n,x}$ 
where
$$\mathcal{C}_\pm =\frac{(a_x\mp 1) a_x a_y \Gamma \sin\theta}{\alpha(n\mp 1) \alpha(n)}.$$
and the divergence-free subspace 
$\bar{\mu}_{n,x} = 0$ is invariant as expected. Note, however, that since $|\mathcal{C}_\pm|$ may be 
larger than 1 a numerical scheme based on the equations without restriction to the divergence free
subspace could be unstable.

We can now calculate 
\begin{equation}
\begin{split}
\tilde\chi_\pm(n)
	&=\begin{pmatrix} 
	\rb_y(n\mp 1) \cdot \chi_\pm(n) \rb_y(n) &
	\rb_y(n\mp 1) \cdot \chi_\pm(n) \rb_z(n) \\
	\rb_z(n\mp 1) \cdot \chi_\pm(n) \rb_y(n) &
	\rb_z(n\mp 1) \cdot \chi_\pm(n) \rb_z(n) 
\end{pmatrix}	\\
	&=\frac{\Gamma \tilde{a}_y}{\alpha_n^2}\begin{pmatrix} 
	\pm\alpha_{n\mp 1}\alpha_n \sin \theta &
	\alpha_{n\mp 1}\cos \theta \\
	0 &
	\pm(\alpha_n^2-1) \sin \theta
\end{pmatrix}
\end{split}
\end{equation}
as required.
\end{proof}

The differential equations for the truncated $\tilde{\linco}_n=(\bar{\mu}_{n,y}, \bar{\mu}_{n,z})$ thus are
\begin{equation}
	\dot{\tilde\linco}_n=\tilde \chi_+(n+1) \tilde\linco_{n+1}+\tilde \chi_-(n-1) \tilde\linco_{n-1} \,.
\end{equation}
Note that there is no $p$ dependence, and the $\Gamma \tilde{a}_y$ dependence can be removed by a time rescaling, so the differential equations can be essentially be expressed in terms of $\alpha_n$ and $\theta$. Compare this to the two-dimensional linearised classes in \cite{DMW16,Dullin16} which are written purely in terms of $\rho_n=1-\alpha_n^{-2}$ (after some transformations).

By reordering the coordinates as
\begin{equation}
	\tilde{\linco}=(...,\tilde{z}_{-1,y},\tilde{z}_{0,y},\tilde{z}_{1,y},...,\tilde{z}_{-1,z},\tilde{z}_{0,z},\tilde{z}_{1,z},...),
\end{equation}
the system can be written in block form as 
\begin{equation}
\label{eq:ombode}
\dot{\tilde{\linco}}=\Gamma \tilde{a}_y\mathcal{M}\tilde{\linco},
	\end{equation}
\begin{equation}
\label{eq:3DM1}
\mathcal{M}=\left ( \begin{pmatrix} 
	 M_1 & 0  \\
	0 &  M_2
	\end{pmatrix}\sin\theta+ \begin{pmatrix} 
	0 &  M_3 \\
	0 & 0
	\end{pmatrix}\cos\theta\right )
\end{equation}
where 
 \begin{equation}
	M_1=\begin{pmatrix}
		\ddots & \vdots & \vdots & \vdots & \vdots & \vdots & \ddots \\
		\ldots & 0 & \frac{\alpha_{-2}}{\alpha_{-1}} & 0 & 0 & 0 &\ldots \\
		\ldots & -\frac{\alpha_{-1}}{\alpha_{-2}}  & 0 & \frac{\alpha_{-1}}{\alpha_{0}} & 0 & 0 & \ldots \\
		\ldots &  0 &  -\frac{\alpha_{0}}{\alpha_{-1}} & 0 & \frac{\alpha_{0}}{\alpha_{1}} & 0  & \ldots \\
		\ldots & 0 & 0 &  -\frac{\alpha_{1}}{\alpha_{0}} & 0 & \frac{\alpha_{1}}{\alpha_{2}}&  \ldots \\
		\ldots & 0 & 0 &  0 & -\frac{\alpha_{2}}{\alpha_{1}} & 0 &   \ldots \\
		\ddots & \vdots & \vdots & \vdots & \vdots & \vdots & \ddots 
	\end{pmatrix},
\end{equation}
 \begin{equation}
	M_2=\begin{pmatrix}
		\ddots & \vdots & \vdots & \vdots & \vdots & \vdots & \ddots \\
		\ldots & 0 & \frac{\alpha_{-1}^2-1}{\alpha_{-1}^2} & 0 & 0 & 0 &\ldots \\
		\ldots & \frac{-\alpha_{-2}^2+1}{\alpha_{-2}^2}  & 0 &\frac{\alpha_{0}^2-1}{\alpha_{0}^2} & 0 & 0 & \ldots \\
		\ldots &  0 &   \frac{-\alpha_{-1}^2+1}{\alpha_{-1}^2} & 0 & \frac{\alpha_{1}^2-1}{\alpha_{1}^2} & 0  & \ldots \\
		\ldots & 0 & 0 &   \frac{-\alpha_{0}^2+1}{\alpha_{0}^2} & 0 & \frac{\alpha_{2}^2-1}{\alpha_{2}^2} &  \ldots \\
		\ldots & 0 & 0 &  0 &  \frac{-\alpha_{1}^2+1}{\alpha_{1}^2} & 0 &   \ldots \\
		\ddots & \vdots & \vdots & \vdots & \vdots & \vdots & \ddots 
	\end{pmatrix},
\end{equation}
and
 \begin{equation}
	M_3=\begin{pmatrix}
		\ddots & \vdots & \vdots & \vdots & \vdots & \vdots & \ddots \\
		\ldots & 0 & \frac{\alpha_{-2}}{\alpha_{-1}^2} & 0 & 0 & 0 &\ldots \\
		\ldots & \frac{\alpha_{-1}}{\alpha_{-2}^2}  & 0 & \frac{\alpha_{-1}}{\alpha_{0}^2} & 0 & 0 & \ldots \\
		\ldots &  0 &  \frac{\alpha_{0}}{\alpha_{-1}^2} & 0 & \frac{\alpha_{0}}{\alpha_{1}^2} & 0  & \ldots \\
		\ldots & 0 & 0 &  \frac{\alpha_{1}}{\alpha_{0}^2} & 0 & \frac{\alpha_{1}}{\alpha_{2}^2}&  \ldots \\
		\ldots & 0 & 0 &  0 & \frac{\alpha_{2}}{\alpha_{1}^2} & 0 &   \ldots \\
		\ddots & \vdots & \vdots & \vdots & \vdots & \vdots & \ddots 
	\end{pmatrix}.
\end{equation}

\subsection{Reducing to Two-Dimensional Linearised Class}
\label{sec:laxred}

At this point it is clear that the spectrum of the linearised dynamics  of a class in three dimensions
as determined by $\mathcal{M}$ is the union of the spectra of $\sin \theta M_1$ and $ \sin \theta M_2$. 
We are now going to prove a stronger result, which shows that $\mathcal{M}$ can be 
block-diagonalised when $\theta \not = 0, \pi$.
The first block will have nearly trivial dynamics, while the second block describes dynamics isomorphic to a 
corresponding two-dimensional linearised Euler flow. Define the transformation matrix
\begin{equation}
	T=\text{diag}(...,\alpha_{-2},\alpha_{-1},\alpha_{0},\alpha_{1},\alpha_{2},...).
\end{equation}

As $\ab$ and $\pb$ are not parallel, $\alpha_n\neq 0$ for all $n$, so $T$ is invertible. 
We will discuss the singular case $\theta=0, \pi$ in Section \ref{sec:linclasses} and consider the possible  dynamical implications in Section \ref{sec:nearnil}.
For now assumel $\theta\neq 0, \pi$, so that $\cot (\theta)$ is nonsingular and we can define
\begin{equation}
\label{eq:TTran}
	\mathcal{T}=\begin{pmatrix} T & \cot(\theta) T \\ 0 & \mathbb{I} \end{pmatrix}
\end{equation}
with inverse
\begin{equation}
	\mathcal{T}^{-1}=\begin{pmatrix} T^{-1} & -\cot(\theta) \mathbb{I} \\ 0 & \mathbb{I} \end{pmatrix}.
\end{equation}
Now we conjugate $\mathcal{ M}$ by $\mathcal{T}$ and define
\begin{equation}
\label{eq:goodmeq}
\begin{split}
	\tilde{\mathcal{M}}(\eta)&:= 
		T^{-1}{\mathcal{M}}T 
	= \begin{pmatrix} 
	 \tilde{M}_1 & 0  \\
	0 &  \tilde{M}_2
	\end{pmatrix}\sin\theta+ \begin{pmatrix} 
	0 &  \tilde{M}_3 \\
	0 & 0
	\end{pmatrix}\cos \theta 
	=   \begin{pmatrix} 
	 \tilde{M}_1 & \eta \tilde{M}_3  \\
	0 &  \tilde{M}_2
	\end{pmatrix}\sin\theta
	\end{split}
\end{equation}
where 
 \begin{equation}
 \label{eq:constM1}
 \begin{split}
	\tilde{M}_1&=\tilde{T}^{-1}M_1\tilde{T} \\
		&=\begin{pmatrix}
		\ddots & \vdots & \vdots & \vdots & \vdots & \vdots & \ddots \\
		\ldots & 0 & 1 & 0 & 0 & 0 &\ldots \\
		\ldots & -1  & 0 & 1 & 0 & 0 & \ldots \\
		\ldots &  0 &  -1 & 0 & 1 & 0  & \ldots \\
		\ldots & 0 & 0 &  -1  & 0 & 1 &  \ldots \\
		\ldots & 0 & 0 &  0 & -1 & 0 &   \ldots \\
		\ddots & \vdots & \vdots & \vdots & \vdots & \vdots & \ddots 
	\end{pmatrix},
	\end{split}
\end{equation}
 \begin{equation}
 \label{eq:reducedM3}
 \begin{split}
	\tilde{M}_3&=\tilde{T}^{-1}M_1\tilde{T}-\tilde{T}^{-1}M_3-M_2 \\
		&=\begin{pmatrix}
		\ddots & \vdots & \vdots & \vdots & \vdots & \vdots & \ddots \\
		\ldots & 0 & \frac{2}{\alpha_{-1}^2} & 0 & 0 & 0 &\ldots \\
		\ldots & 0  & 0 & \frac{2}{\alpha_{0}^2} & 0 & 0 & \ldots \\
		\ldots &  0 &  0 & 0 & \frac{2}{\alpha_{1}^2} & 0  & \ldots \\
		\ldots & 0 & 0 &  0  & 0 & \frac{2}{\alpha_{2}^2} &  \ldots \\
		\ldots & 0 & 0 &  0 & 0 & 0 &   \ldots \\
		\ddots & \vdots & \vdots & \vdots  & \vdots & \vdots & \ddots 
	\end{pmatrix},
	\end{split}
\end{equation}
$\tilde{M}_2=M_2$, and $\eta=\cot \theta$.

The matrix $M_2$ is the same as the matrix governing the dynamics of a class of the linearised equations on a two-dimensional domain, as in Section 2.5 of \cite{DMW16}. 
Thus the dynamics have been split into two parts; one driven by $M_2=\tilde{M}_2$ describing the dynamics of a two-dimensional linearised class, and another determined by  $\tilde{M}_1$ with constant coefficient coupling to the 2nd group of modes given by $\tilde{M}_3$. The coupling between the two sets of modes depends on the value of $\eta$. If $\eta=0$, there is no coupling and the matrix is block diagonal. We will now show that $\tilde{\mathcal{M}}$ is always similar to such a block diagonal matrix if $\eta\neq 0$.

Define $\rho_k$ as  
\begin{equation}
\label{eq:rhored}
	\rho_k:=1-\frac{1}{\alpha_n^2}
\end{equation} 
analogous to \cite{DMW16}.
Then $\tilde{M}_2$ can be written as
\begin{equation}
 \label{eq:rhoM2}
	\tilde{M}_2=\begin{pmatrix}
		\ddots & \vdots & \vdots & \vdots & \vdots & \vdots & \ddots \\
		\ldots & 0 & \rho_{-1} & 0 & 0 & 0 &\vdots \\
		\ldots & -\rho_{-2} & 0 & \rho_0 & 0 & 0 & \vdots \\
		\ldots &  0 &  -\rho_{-1} & 0 & \rho_1 & 0  & \vdots \\
		\ldots & 0 & 0 &  -\rho_0  & 0 & \rho_2 &  \vdots \\
		\ldots & 0 & 0 &  0 & -\rho_1 & 0 &   \vdots \\
		\ddots & \vdots & \vdots & \vdots & \vdots & \vdots & \ddots 
	\end{pmatrix}
\end{equation}
and $\tilde{M}_3$ can be written as 
 \begin{equation}
 \label{eq:rhoM3}
	\tilde{M}_3=\begin{pmatrix}
		\ddots & \vdots & \vdots & \vdots & \vdots & \vdots & \ddots \\
		\ldots & 0 & 2(1-\rho_{-1}) & 0 & 0 & 0 &\vdots \\
		\ldots & 0  & 0 & 2(1-\rho_{0}) & 0 & 0 & \vdots \\
		\ldots &  0 &  0 & 0 & 2(1-\rho_{1}) & 0  & \vdots \\
		\ldots & 0 & 0 &  0  & 0 & 2(1-\rho_{2}) &  \vdots \\
		\ldots & 0 & 0 &  0 & 0 & 0 &   \vdots \\
		\ddots & \vdots & \vdots & \vdots & \vdots & \vdots & \ddots 
	\end{pmatrix}
\end{equation}
Now define the matrix
\begin{equation}
	B=\begin{pmatrix}
		\ddots & \vdots & \vdots & \vdots & \vdots& \vdots & \vdots & \ddots \\
		\ldots & -1 & 0 & +1 & 0 & +1 & 0  &\vdots \\
		\ldots & 0 & -1 & 0 & +1 & 0 & +1   &\vdots \\
		\ldots & -1 & 0 & -1 & 0 & +1 & 0   &\vdots \\
		\ldots & 0 & -1 & 0  & -1 & 0 & +1 & \vdots \\
		\ldots & -1 & 0 & -1 & 0  & -1 & 0  & \vdots \\
		\ddots & \vdots & \vdots & \vdots & \vdots& \vdots & \vdots & \ddots 
	\end{pmatrix}.
\end{equation}

This is a Topelitz operator \cite{Gray06} composed of shifted copies of the row \begin{equation}(...,b_{-3},b_{-2},b_{-1},b_{0},b_1,b_2,b_3,...)\end{equation} where
\begin{equation}
	b_i=\begin{cases}
		+1 \text{ if } i=2k\text{ for some }k>0 \\
		-1 \text{ if } i=2k\text{ for some } k\leq 0 \\
		0 \text{ otherwise.}
	\end{cases}
\end{equation}

\begin{lemma}
\label{lem:laxpair}
The operator $\tilde{\mathcal{M}}(\eta)$ and the operator $\mathcal{B} =  \begin{pmatrix} 0 & B \\ 0 & 0 \end{pmatrix}$ are a Lax pair
\[
    \frac{d}{d\eta} \tilde{\mathcal{M}}(\eta) = [\tilde{\mathcal{M}}(\eta), \mathcal{B}]
\]
so that $\tilde{\mathcal{M}}(\eta)$ is an iso-spectral deformation of $\tilde{\mathcal{M}}(0)$.
\end{lemma}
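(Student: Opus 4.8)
The plan is to verify the Lax equation $\frac{d}{d\eta}\tilde{\mathcal{M}}(\eta)=[\tilde{\mathcal{M}}(\eta),\mathcal{B}]$ directly, by writing both sides in block form and matching $2\times 2$ blocks. From \eqref{eq:goodmeq} we have
\[
\tilde{\mathcal{M}}(\eta)=\begin{pmatrix}\tilde M_1 & \eta\tilde M_3\\ 0 & \tilde M_2\end{pmatrix}\sin\theta,
\]
so the left-hand side is $\frac{d}{d\eta}\tilde{\mathcal{M}}(\eta)=\begin{pmatrix}0 & \tilde M_3\\ 0 & 0\end{pmatrix}\sin\theta$. For the right-hand side, with $\mathcal{B}=\begin{pmatrix}0 & B\\ 0 & 0\end{pmatrix}$, a block computation gives
\[
[\tilde{\mathcal{M}}(\eta),\mathcal{B}]
=\begin{pmatrix}\tilde M_1 & \eta\tilde M_3\\ 0 & \tilde M_2\end{pmatrix}\begin{pmatrix}0 & B\\ 0 & 0\end{pmatrix}\sin\theta-\begin{pmatrix}0 & B\\ 0 & 0\end{pmatrix}\begin{pmatrix}\tilde M_1 & \eta\tilde M_3\\ 0 & \tilde M_2\end{pmatrix}\sin\theta
=\begin{pmatrix}0 & \tilde M_1 B - B\tilde M_2\\ 0 & 0\end{pmatrix}\sin\theta.
\]
Hence the identity reduces to the single scalar-matrix statement $\tilde M_3 = \tilde M_1 B - B\tilde M_2$, which is independent of $\eta$; this is consistent with the left-hand side being constant in $\eta$, so no further conditions arise.

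The core of the proof is therefore the commutator-type identity $\tilde M_3=\tilde M_1 B-B\tilde M_2$ for the bi-infinite Toeplitz-like operators defined in \eqref{eq:reducedM3}, \eqref{eq:rhoM2}, \eqref{eq:rhoM3} and the matrix $B$. I would prove this entrywise. Index rows and columns of these operators by $\Z$ (each entry being the $n$-th diagonal block's scalar), so that $(\tilde M_1)_{m,m+1}=1$, $(\tilde M_1)_{m,m-1}=-1$, $(\tilde M_2)_{m,m+1}=\rho_m$, $(\tilde M_2)_{m,m-1}=-\rho_{m-1}$, $(\tilde M_3)_{m,m+1}=2(1-\rho_m)$ with all other entries zero, and $B_{m,\ell}=b_{\ell-m}$ with $b_i$ as defined in the excerpt. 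Then $(\tilde M_1 B)_{m,\ell}=B_{m+1,\ell}-B_{m-1,\ell}=b_{\ell-m-1}-b_{\ell-m+1}$ and $(B\tilde M_2)_{m,\ell}=B_{m,\ell-1}\rho_{\ell-1}-B_{m,\ell+1}\rho_{\ell}=b_{\ell-1-m}\rho_{\ell-1}-b_{\ell+1-m}\rho_{\ell}$. Subtracting, $(\tilde M_1B-B\tilde M_2)_{m,\ell}=b_{\ell-m-1}(1-\rho_{\ell-1})-b_{\ell-m+1}(1-\rho_{\ell})$. It remains to check this equals $(\tilde M_3)_{m,\ell}$. The key combinatorial fact is that $b_{i-1}-b_{i+1}=0$ unless $i\in\{0,1\}$: indeed from the definition $b$ is $+1$ on positive evens and $-1$ on nonpositive evens and $0$ on odds, so consecutive-even differences telescope to zero except across the sign change at $i=0$, and similarly $b_{i-1}-b_{i+1}$ picks out precisely $i=1$ (where it equals $1-(-1)=2$... ) — more precisely, writing out the cases $\ell-m$ even versus odd one finds the only surviving entry is $\ell=m+1$, where $(\tilde M_1B-B\tilde M_2)_{m,m+1}=b_0(1-\rho_m)-b_2(1-\rho_{m+1})$; substituting $b_0=-1$, $b_2=+1$ gives $-(1-\rho_m)-(1-\rho_{m+1})$, so to get $+2(1-\rho_m)$ one must instead track $b_{\ell-m-1}$ and $b_{\ell-m+1}$ with the convention in the excerpt's display — I would pin down the indexing of $B$ against its displayed matrix to fix the sign, and then the nonzero entry lands exactly at $(m,m+1)$ with value $2(1-\rho_m)$, matching $\tilde M_3$, while all other entries of the difference vanish because $b_{i-1}=b_{i+1}$ there.

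The main obstacle is purely bookkeeping: getting the offset conventions for the Toeplitz operator $B$ (which displayed diagonal corresponds to $b_0$, and whether row index increases downward or the class index $n$ increases) aligned so that the surviving entry of $\tilde M_1 B - B\tilde M_2$ is $+2(1-\rho_n)/1$ in the correct $(n,n+1)$ slot rather than, say, $(n,n-1)$ with a sign flip. Once the indexing is fixed consistently with \eqref{eq:reducedM3}, the computation is a two-line telescoping argument using only the piecewise definition of $b_i$ and the relation $\alpha_n^{-2}=1-\rho_n$. I would also remark that since $\mathcal{B}$ is strictly upper block-triangular and nilpotent ($\mathcal{B}^2=0$), $\exp(\eta\mathcal{B})=\mathbb{I}+\eta\mathcal{B}$ is a well-defined invertible conjugation for every $\eta$, which makes the Lax equation genuinely an iso-spectral deformation: $\tilde{\mathcal{M}}(\eta)=e^{-\eta\mathcal{B}}\tilde{\mathcal{M}}(0)e^{\eta\mathcal{B}}$, so $\tilde{\mathcal{M}}(\eta)$ and $\tilde{\mathcal{M}}(0)$ have the same spectrum, which is the stated conclusion.
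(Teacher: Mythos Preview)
Your approach is essentially the same as the paper's: both reduce the block Lax equation to the single operator identity $\tilde M_3 = \tilde M_1 B - B\tilde M_2$, which the paper simply declares ``straightforward to verify'' while you attempt the explicit entrywise telescoping check (and correctly identify that the only residual issue is the bookkeeping of the Toeplitz offset convention for $B$). Your closing remark that $\exp(\eta\mathcal{B})=\mathbb{I}+\eta\mathcal{B}$ furnishes the explicit conjugation realising the iso-spectrality also mirrors the paper's concluding observation.
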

\begin{proof}
The  operator $\tilde{\mathcal{M}}(\eta)$ decomposes into the block-diagonal operator  $X =\tilde{\mathcal{M}}(0)$, 
and the block upper triangular (and hence nilpotent) operator $N$ such that $\tilde{\mathcal{M}}(\eta) = X + \eta N$. 
Any two such nilpotent operators have vanishing commutator, so that the Lax equation becomes
\[
       N = [X + \eta N, \mathcal{B}] = [X, \mathcal{B}]
\]
which in block form reduces to 
\[
     \tilde M_3 = \tilde M_1 B - B \tilde M_2 \,.
\]
It is straightforward to verify that $B$ indeed satisfies this equation.
\end{proof}

The similarity between $\tilde{\mathcal{M}}(0)$ and $\tilde{\mathcal{M}}(\eta)$ is simply given by exponentiating $\mathcal{ B}$, so that
 $\tilde{\mathcal{M}}(0) = \exp(-\eta \mathcal{B}) \tilde{\mathcal{M}}(\eta)  \exp(\eta \mathcal{B}) $
where explicitly  
\begin{equation}
	\exp( \eta \mathcal{ B}) = \begin{pmatrix} \mathbb{I} & \eta {B} \\ 0 & \mathbb{I} \end{pmatrix}, \quad
	\exp( -\eta \mathcal{ B} ) =\begin{pmatrix} \mathbb{I} & -\eta {B} \\ 0 & \mathbb{I} \end{pmatrix}.
\end{equation}

The  operator $\tilde{\mathcal{M}}(0)$ is independent of $\eta$ and block diagonal with blocks $\tilde{M}_1$, $\tilde{M}_2$. The matrix $\tilde{M}_1$ is Toeplitz 
and thus diagonalisable by a discrete Fourier transform \cite{Karner02} and $\tilde{M}_2$ is diagonalisable by the results of \cite{DMW16}. We thus arrive at the following conclusion.

\begin{theorem}[Equating two- and three-dimensional classes]
\label{thm:2d23d}
Assume that $\Gb$, $\pb$, and $\ab$ linearly independent, and initial conditions are on the divergence-free subspace. 
Then the three-dimensional class with parameters $\ab$, $\pb$, $\Gb$ is linearly stable if and only if
 the two-dimensional class with parameters  $\tilde{\ab}=(\tilde{a}_x,\tilde{a}_y)$, $\tilde{\pb}=(1,0)$, $\Gamma=\tilde{a}_y\sin \theta$ is linearly stable, where the reduced parameters
 $\tilde{a}_x$, $\tilde{a}_y$ and $\theta$ are given in Lemma~\ref{lem:redpar}.
\end{theorem}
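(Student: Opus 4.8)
The plan is to assemble the chain of reductions already built up in the preceding sections and observe that each link preserves linear stability, so that the three‑dimensional class is linearly stable precisely when a specific two‑dimensional class is. Concretely: by Lemma~\ref{lem:redpar} we may pass, via a rotation (which is an orthogonal change of basis and hence does not affect linear stability), to coordinates in which $\pb=(p,0,0)$, $\ab=p(\tilde a_x,\tilde a_y,0)$, $\Gb=(0,\Gamma\cos\theta,\Gamma\sin\theta)$; by the hypothesis that $\Gb,\pb,\ab$ are linearly independent we have $\theta\neq 0,\pi$. Restricting to the divergence‑free subspace (which is invariant, as noted before the statement of the unnamed lemma computing $\tilde\chi_\pm$) removes the $x$‑component and reduces the class dynamics to the block operator $\mathcal M$ of \eqref{eq:3DM1}. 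Conjugating by $T=\operatorname{diag}(\dots,\alpha_{-1},\alpha_0,\alpha_1,\dots)$ (invertible, so stability‑preserving) yields $\tilde{\mathcal M}(\eta)$ in \eqref{eq:goodmeq} with $\eta=\cot\theta$.

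Next I would invoke Lemma~\ref{lem:laxpair}: since $\tilde{\mathcal M}(\eta)=\exp(-\eta\mathcal B)\,\tilde{\mathcal M}(0)\,\exp(\eta\mathcal B)$, with $\exp(\pm\eta\mathcal B)$ explicitly the bounded block‑unitriangular operators displayed after the lemma, the operators $\tilde{\mathcal M}(\eta)$ and $\tilde{\mathcal M}(0)$ are genuinely similar — not merely isospectral — via a bounded operator with bounded inverse. Linear stability (boundedness of $\exp(t\,\cdot)$ uniformly in $t$) is invariant under such a similarity, so the three‑dimensional class is linearly stable iff $\tilde{\mathcal M}(0)$ generates a bounded semigroup. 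But $\tilde{\mathcal M}(0)=\sin\theta\,\mathrm{diag}(\tilde M_1,\tilde M_2)$ is block diagonal, so this holds iff both $\sin\theta\,\tilde M_1$ and $\sin\theta\,\tilde M_2$ are linearly stable. The block $\tilde M_1$ is the constant Toeplitz tridiagonal operator of \eqref{eq:constM1}, diagonalisable by the discrete Fourier transform with purely imaginary spectrum $\{2i\sin\xi:\xi\in[0,2\pi)\}$ and a complete set of (generalised) eigenvectors, hence always linearly stable; so the three‑dimensional class is linearly stable iff $\sin\theta\,\tilde M_2$ is. Finally, $\tilde M_2=M_2$ is verbatim the operator governing a two‑dimensional linearised class (Section~2.5 of \cite{DMW16}) with parameters $\tilde a_x,\tilde a_y$, and the overall scalar $\sin\theta$ combines with the time rescaling by $\Gamma\tilde a_y$ (noted after the $\tilde\chi_\pm$ lemma) to give exactly the two‑dimensional class with $\tilde{\pb}=(1,0)$, $\tilde{\ab}=(\tilde a_x,\tilde a_y)$ and effective circulation $\Gamma=\tilde a_y\sin\theta$. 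This establishes the stated equivalence.

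The main obstacle is the infinite‑dimensionality: "linearly stable" for these operators on $\ell^2$ is a statement about uniform boundedness of the semigroup, not just about the location of the spectrum, and similarity transformations only transfer it cleanly when implemented by operators that are bounded with bounded inverse. I would therefore be careful to note that $T$ and $T^{-1}$ are bounded — this needs $\alpha_n=\sqrt{(\tilde a_x+n)^2+\tilde a_y^2}$ to be bounded away from $0$ and $\infty$ on each fixed finite block but \emph{unbounded} as $|n|\to\infty$, so $T$ is in fact \emph{not} bounded on $\ell^2$. The honest resolution, which I would make explicit, is that the natural state space is not the naive $\ell^2$ but the energy (or enstrophy) space in which the physically meaningful norm is used; in the two‑dimensional treatment of \cite{DMW16} the analogous rescaling is absorbed into the choice of inner product, and $T$ is precisely the map intertwining the $\ell^2$ structure with that weighted structure. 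So either one works throughout in the weighted space where $T$ becomes an isometry, or — the route I would actually take here — one reduces stability questions to the individual classes, where within each class only finitely many $\alpha_n$ are "small" and the genuinely unbounded tail behaves like the constant‑coefficient $\tilde M_1$; combined with the fact that $\mathcal B$ is a bounded operator (its symbol is a bounded Fourier multiplier, being a fixed bi‑infinite row summed geometrically), the similarity $\exp(\pm\eta\mathcal B)$ is bona fide bounded with bounded inverse and transfers stability without the $T$‑issue, since $T$ only appeared to bring $\tilde M_1$ and $\tilde M_3$ into constant‑coefficient form and does not touch $\tilde M_2$. I would spell out this point rather than gloss it, as it is the one place the argument is not a formality.
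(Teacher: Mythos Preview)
Your overall architecture is precisely the paper's: rotate via Lemma~\ref{lem:redpar}, restrict to the divergence-free subspace, pass to $\mathcal M$, conjugate to $\tilde{\mathcal M}(\eta)$, invoke Lemma~\ref{lem:laxpair} to reach the block-diagonal $\tilde{\mathcal M}(0)$, and then identify $\tilde M_2$ with the two-dimensional operator. Your attention to the functional-analytic subtleties (unboundedness of $T$, the distinction between spectral and linear stability) actually goes beyond what the paper makes explicit, and is the right instinct.

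However, your proposed resolution contains a concrete error: the operator $B$ is \emph{not} bounded on $\ell^2$. Its entries $b_i$ equal $\pm 1$ at every even index and do not decay at all, let alone geometrically; the formal Fourier symbol $\sum_{k>0}e^{2ik\xi}-\sum_{k\le 0}e^{2ik\xi}$ diverges on the circle. So $\exp(\pm\eta\mathcal B)$ is not a bounded similarity in the naive sense, and your claim that ``$\mathcal B$ is a bounded operator (its symbol is a bounded Fourier multiplier, being a fixed bi-infinite row summed geometrically)'' is simply false. The paper sidesteps this by arguing spectrally rather than via semigroup bounds: it observes that $i\tilde M_1$ is Hermitian with spectrum $2i[-1,1]$, cites \cite{Li04} for the fact that $\tilde M_2$ already has continuous spectrum $2i[-1,1]$, and concludes that the spectrum of $\tilde{\mathcal M}$ coincides with that of $\tilde M_2$; linear (as opposed to merely spectral) stability then rests on the diagonalisability of $\tilde M_1$ (Toeplitz, via DFT) and of $\tilde M_2$ (cited from \cite{DMW16}), stated just before the theorem. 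If you want to keep your semigroup-similarity framing, you would need either to work in a weighted space where $B$ becomes bounded, or to follow the paper and replace the similarity step by the spectral-inclusion argument together with the cited diagonalisability results.
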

\begin{proof}
By Lemma~\ref{lem:redpar}, the dynamics of a class with parameters $\ab$, $\pb$ and $\Gb$ are isomorphic to the dynamics of a class with parameters $\tilde{\ab}=(\tilde{a}_x,\tilde{a}_y,0)$, $\pb=(1,0,0)$ and $\Gb=(0,\cos\theta,\sin\theta)$. 
By assumption $\Gb$, $\pb$, and $\ab$ linearly independent so that
$\Gb^\cdot (\ab\times\pb) \neq 0$ so $\theta \neq 0,\pi$. 

Then the dynamics of this class can be reduced by the divergence-free condition and are given by the matrix $\mathcal{M}$ \eqref{eq:3DM1}. If $\sin\theta\neq 0$, by the transformation in Section \ref{sec:laxred} the dynamics are given by the matrix $\tilde{\mathcal{M}}(0)$ . The matrix $\tilde{\mathcal{M}}(0)$ is block diagonal with a constant antisymmetric block $\tilde{{M}}_1$ \eqref{eq:constM1} and a block $\tilde{M}_2$. The spectrum of $\tilde{{M}}_1$ is purely imaginary as $i\tilde{{M}}_1$ is Hermitian, and covers the interval $2i[-1,1]$. 
The block $\tilde{M}_2$ is equal to the matrix giving the dynamics of the equivalent two-dimensional problem with the parameters $\ab=(\tilde{a}_x,\tilde{a}_y)$, $\pb=(1,0)$, and 
$\Gamma=\tilde{a}_y\sin\theta$; see e.g. equation (2.29) in \cite{DMW16} or equation (VI.1) in \cite{Li00} with the appropriate definition of $\rho_k$.
By \cite{Li04}, this also has continuous spectrum $2i[-1,1]$ 
and thus a value is in the spectrum of $\tilde{\mathcal{M}}$ if and only if it is in the spectrum of $\tilde{M}_2$. 
\end{proof}
This shows that the continous spectrum of a class is $2\tilde{a}_y|\sin\theta|i[-1,1]$. By considering all classes we can conclude from this that the continuous spectrum of the linearised Euler equations in a three-dimensional periodic domain is the full imaginary axis, analogous to the result in \cite{Li04}.

Note that even though the imaginary spectrum of the two- and three-dimensional classes are the same, the spectral density will be different, as there is a contribution to the imaginary spectrum from $\tilde{{M}}_1$ in addition to the contribution from $\tilde{{M}}_2$.

\section{Linearly stable Shear Flows}

\begin{figure}
\includegraphics[trim={0cm 0.3cm 0cm 0cm}, clip,width=0.49\textwidth]{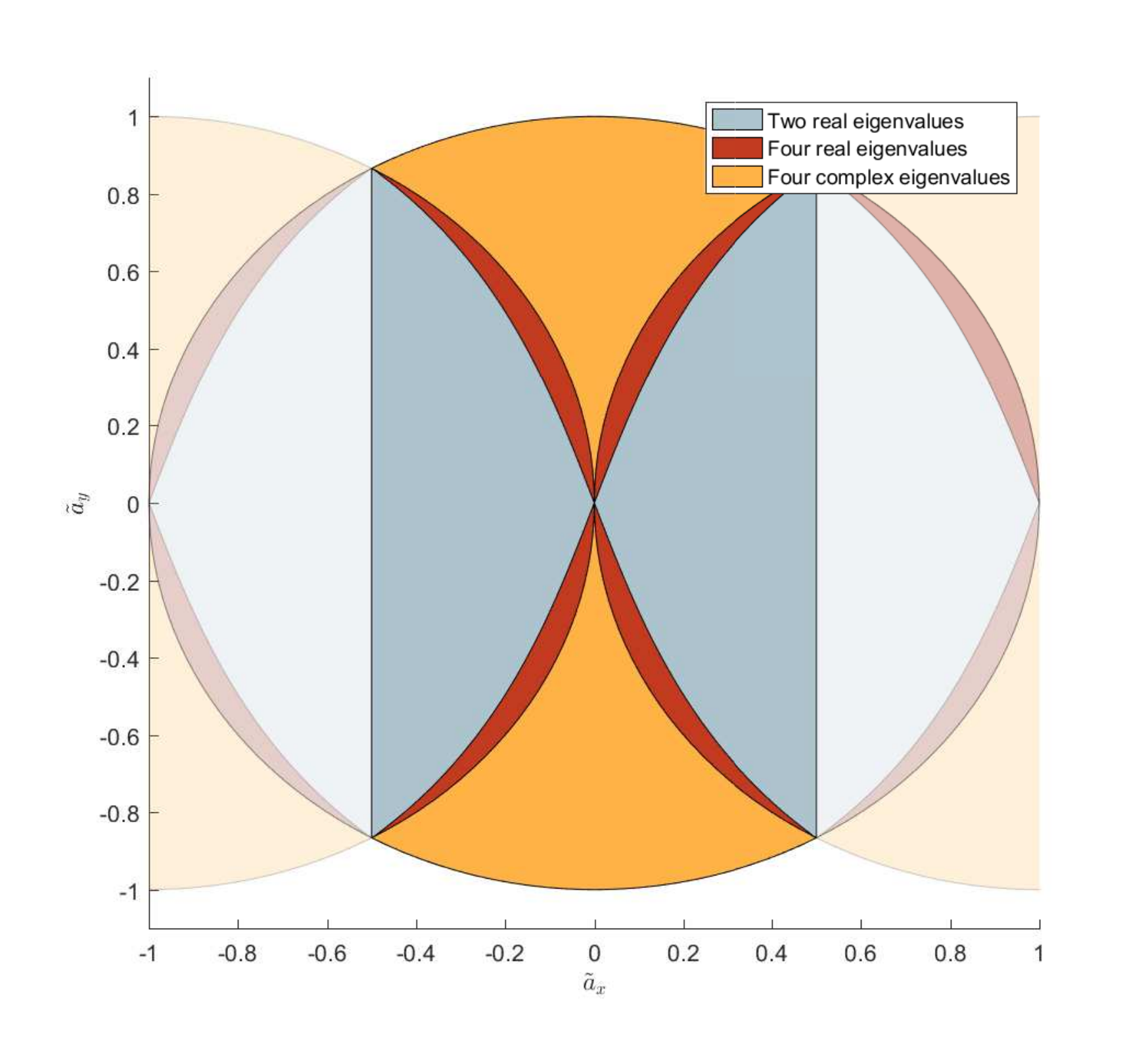}
\includegraphics[trim={0cm 2.5cm 0cm 0cm}, clip,width=0.49\textwidth]{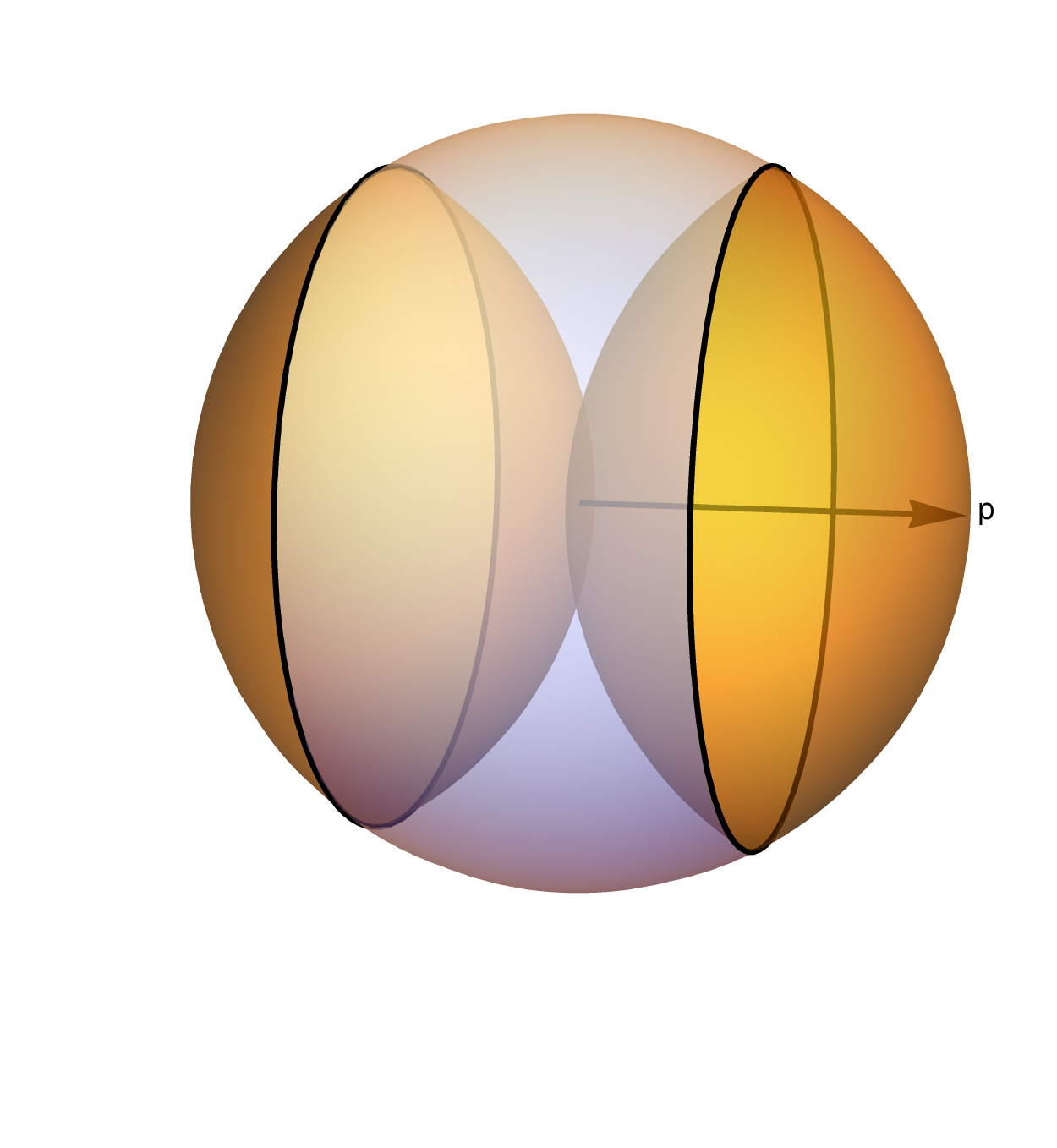}
\caption{Left: depending on  $\tilde{a}_x$, $\tilde{a}_y$, and $\theta\neq 0,\pi$ one can determine the number and type of nonimaginary eigenvalues of $\tilde{\mathcal{M}}$. If $\tilde{a}_x^2+\tilde{a}_y^2\geq 1$ there are no nonimaginary eigenvalues. If $(\tilde{a}_x\pm1)^2+\tilde{a}_y^2\geq 1$ there is a pair of real eigenvalues. Otherwise, there are four nonimaginary eigenvalues occurring as either two real pairs or a complex quadruplet. The lighter region is outside the principal domain $\mathcal{A}$. This is the unstable ellipse introduced in \cite{Li00} and analysed in \cite{DMW16}. Right: the unstable ellipsoid  as in Definition \ref{defn:uellipsoid}, which conveys the same information as the unstable ellipse in terms of the original parameter $\ab\in\mathbb{Z}^3$. The ellipsoid illustrated uses the parameters $\pb=(1,0,0)$ and $\aspb=(1,1,1)$; for other parameter values, the region will change by an affine transformation according to the scaling and rotations in Section \ref{sec:simplifying1}.}
\label{fig:ueAll}
\end{figure}

In Theorem \ref{thm:2d23d} it was shown that the dynamics of a three-dimensional class can be reduced to the dynamics of a two-dimensional class
when $\theta \not = 0, \pi$.
Thus we can obtain similar stability results. 
We begin by showing that all but finitely many of the classes do not contribute linear instability. By carefully controlling the domain size $\aspb$ and excluding $\theta=0,\pi$, we can then find shear flows that are linearly stable.

\subsection{Spectrally Stable Classes}

Define the \emph{unstable ellipsoid}.
\begin{definition}[The Unstable Ellipsoid]
\label{defn:uellipsoid}
 Define the unstable ellipsoid for $\pb\in \mathcal{L}$ as
 \begin{equation}
 \label{eq:unstableellipsoid}
 	D_\pb=\{ \xb\in\mathbb{R}^3 \; | \; |\xb|<|\pb| \}.
\end{equation}
\end{definition}

The unstable ellipsoid is illustrated in Figure \ref{fig:ueAll}. This object will serve the same purpose as the unstable disc introduced in \cite{Li00}, delineating linearly stable and unstable classes. Classes that do not intersect the unstable ellipsoid at a lattice point and do not satisfy $\theta =0,\pi$ cannot contribute linear instability.

Begin by noting that for general $\ab,\pb\in \mathbb{Z}^3$ with associated reduced parameters $\tilde{a}_x,\tilde{a}_y$ defined by \eqref{eq:3dparam}, $|\ab||\pb|=\sqrt{\tilde{a}_x^2+\tilde{a}_y^2}$. Thus the definition of $\rho_k$  is
\begin{equation}
\label{eq:rhofull}
\begin{split}
	\rho_k&:=1-\frac{1}{\tilde{a}_x^2+(\tilde{a}_y+k)^2} \\
		&={|\pb|^2}\left ( \frac{1}{|\pb|^2}-\frac{1}{|\ab|^2} \right ) .
\end{split}
\end{equation}
Then this matches the definition for $\rho_k$ in \cite{Dullin16} up to the constant factor of  ${|\pb|^2}$. 
Since $\tilde{M}_2$ is linear in $\rho_k$ 
this factor can be removed by a time rescaling. 

Also note that for $\ab \in \mathcal{A}$ the principal domain, $\ab \notin D_\pb \implies \ab+k\pb \notin D_\pb$ for all $k\in\mathbb{Z}$.

\begin{proposition}
\label{prop:insideunstable}
If and only if $\ab+k\pb \in D_\pb$, then $\rho_k<0$.
\end{proposition}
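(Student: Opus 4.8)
The plan is to unwind the definitions and reduce the claim to a single elementary inequality. Recall from \eqref{eq:rhored} that $\rho_k = 1 - 1/\alpha_n^2$ (with the index reconciled between $n$ and $k$; for the class led by $\ab\in\mathcal A$, the mode $\ab+k\pb$ has $\alpha_k = |\ab+k\pb|/|\pb|$). Hence $\rho_k < 0$ if and only if $\alpha_k^2 < 1$, i.e. $|\ab+k\pb|^2 < |\pb|^2$. But that is exactly the condition $\ab+k\pb \in D_\pb$ from Definition~\ref{defn:uellipsoid}, since $D_\pb = \{\xb : |\xb| < |\pb|\}$. So the proposition is essentially a restatement, modulo making sure the index bookkeeping and the lattice $\mathcal L$ (in the anisotropic case) are handled correctly.

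Concretely, I would proceed as follows. First, recall $\alpha_n^2 = (\tilde a_x+n)^2 + \tilde a_y^2$ from its definition, and note via \eqref{eq:3dparam} that $(\tilde a_x+n)^2 + \tilde a_y^2 = |\ab+n\pb|^2/|\pb|^2$; this uses $\tilde a_x = (\ab\cdot\pb)/|\pb|^2$, $\tilde a_y = |\ab\times\pb|/|\pb|^2$, together with $|\ab+n\pb|^2 = |\ab|^2 + 2n(\ab\cdot\pb) + n^2|\pb|^2$ and the identity $|\ab\times\pb|^2 = |\ab|^2|\pb|^2 - (\ab\cdot\pb)^2$ — this is the computation already invoked just before \eqref{eq:rhofull}. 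Second, substitute into $\rho_k = 1 - 1/\alpha_k^2$ to get
\[
	\rho_k = 1 - \frac{|\pb|^2}{|\ab+k\pb|^2} = \frac{|\ab+k\pb|^2 - |\pb|^2}{|\ab+k\pb|^2}.
\]
Since the denominator $|\ab+k\pb|^2$ is strictly positive (as $\ab$ and $\pb$ are not parallel, so $\ab+k\pb \neq \mathbf 0$), the sign of $\rho_k$ equals the sign of $|\ab+k\pb|^2 - |\pb|^2$. Third, conclude: $\rho_k < 0 \iff |\ab+k\pb|^2 < |\pb|^2 \iff |\ab+k\pb| < |\pb| \iff \ab+k\pb \in D_\pb$, which proves both directions of the ``if and only if'' simultaneously.

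There is no real obstacle here; the only things to be careful about are (i) the identification of the running index $k$ with the subscript $n$ used in the definition of $\alpha_n$ and $\rho$, so that $\rho_k$ really does correspond to the mode $\ab+k\pb$, and (ii) in the anisotropic setting of Section~\ref{sec:aniso}, reading $\ab,\pb \in \mathcal L$ rather than $\Z^3$, which changes nothing in the argument since all the identities above are valid for arbitrary real vectors. The strictness of the inequality is preserved throughout because $|\ab+k\pb|^2 > 0$, so no edge case arises. Thus the proposition follows immediately from the algebraic identity for $\rho_k$ in terms of $|\ab+k\pb|$ and $|\pb|$.
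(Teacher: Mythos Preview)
Your proof is correct and follows the same approach as the paper's: both simply unwind the definitions of $D_\pb$ and $\rho_k$ (via \eqref{eq:rhofull}) to see that each is equivalent to $|\ab+k\pb|<|\pb|$. Your version is considerably more detailed about the index bookkeeping and the positivity of the denominator, but the underlying argument is identical.
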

\begin{proof} 
By definition, $\ab+k\pb\in D_\pb\;\iff \;|\ab|<|\pb|$. Therefore, $\rho_k<0$ by \eqref{eq:rhofull}.
\end{proof}

\begin{proposition}[Stable Classes]
\label{prop:stableclasses3D}
If $\Gb\cdot(\ab\times\pb) \neq 0$ and $\ab\notin D_\pb$, the associated class is linearly stable. 
\end{proposition}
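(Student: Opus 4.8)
The plan is to reduce the claim to a known two-dimensional stability statement via Theorem~\ref{thm:2d23d}. Since $\Gb$, $\pb$, $\ab$ are linearly independent (the hypothesis $\Gb\cdot(\ab\times\pb)\neq 0$ is exactly this, and it also guarantees $\theta\neq 0,\pi$), Theorem~\ref{thm:2d23d} applies: the three-dimensional class with parameters $\ab$, $\pb$, $\Gb$ is linearly stable if and only if the corresponding two-dimensional class with $\tilde{\ab}=(\tilde a_x,\tilde a_y)$, $\tilde{\pb}=(1,0)$ is. So it suffices to prove that this two-dimensional class is linearly stable whenever $\ab\notin D_\pb$.

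First I would translate the geometric condition $\ab\notin D_\pb$ into the arithmetic condition on the $\rho_k$. By Proposition~\ref{prop:insideunstable}, $\ab+k\pb\in D_\pb$ if and only if $\rho_k<0$; and by the remark just before Proposition~\ref{prop:insideunstable}, if $\ab\in\mathcal{A}$ and $\ab\notin D_\pb$ then $\ab+k\pb\notin D_\pb$ for \emph{all} $k\in\mathbb{Z}$. Hence $\ab\notin D_\pb$ forces $\rho_k\geq 0$ for every $k\in\mathbb{Z}$. (Strictly, one should note that even if the chosen representative $\ab$ is not in $\mathcal{A}$, the class it leads is the same as the one led by its $\mathcal{A}$-representative, so no generality is lost.) So the task becomes: the two-dimensional linearised class operator $\tilde M_2$, which by \eqref{eq:rhoM2} is the tridiagonal skew-type operator built from the sequence $(\rho_k)$, has purely imaginary spectrum when all $\rho_k\geq 0$.

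Next I would invoke the two-dimensional results already cited in the paper. The matrix $\tilde M_2$ is exactly the operator studied in \cite{DMW16} (and \cite{Li00}), and the ``unstable disc'' analysis there shows that a class contributes no nonimaginary spectrum precisely when none of its modes lies inside the unstable disc, i.e.\ when $\rho_k\geq 0$ for all $k$; see also the left panel of Figure~\ref{fig:ueAll}, where the condition $\tilde a_x^2+\tilde a_y^2\geq 1$ (equivalently $|\ab|\geq|\pb|$, i.e.\ $\ab\notin D_\pb$) is stated to yield no nonimaginary eigenvalues. Combined with the continuous-spectrum statement $2i\tilde a_y|\sin\theta|[-1,1]$ from Theorem~\ref{thm:2d23d} and the fact that $\tilde M_1$ contributes only the purely imaginary interval $2i[-1,1]$ (as $i\tilde M_1$ is Hermitian), we conclude that the entire spectrum of $\tilde{\mathcal{M}}$ lies on the imaginary axis, so the class is spectrally, hence linearly, stable.

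The main obstacle is bookkeeping rather than a deep difficulty: one must be careful that ``linearly stable'' here means no Jordan blocks with nontrivial nilpotent part and no eigenvalues off the imaginary axis, and the $\theta\neq 0,\pi$ hypothesis is precisely what rules out the nilpotent-block phenomenon discussed in Section~\ref{sec:linclasses}. So the one genuine point to check is that when $\rho_k\geq 0$ for all $k$, the two-dimensional operator $\tilde M_2$ is not merely spectrally stable but has semisimple spectrum — this is exactly the content of the two-dimensional analysis in \cite{DMW16}, which diagonalises $\tilde M_2$ in this regime, so I would cite that rather than reprove it. Everything else is a direct application of Theorem~\ref{thm:2d23d} and Proposition~\ref{prop:insideunstable}.
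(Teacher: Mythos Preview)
Your proposal is correct and follows essentially the same route as the paper: use $\Gb\cdot(\ab\times\pb)\neq 0$ to get $\theta\neq 0,\pi$, invoke Theorem~\ref{thm:2d23d} to reduce to the two-dimensional class, use $\ab\notin D_\pb$ (with $\ab\in\mathcal{A}$) and Proposition~\ref{prop:insideunstable} to obtain $\rho_k\geq 0$ for all $k$, and then cite the two-dimensional analysis of \cite{DMW16} for purely imaginary spectrum and diagonalisability. Your extra remarks about the $\mathcal{A}$-representative and the semisimplicity needed for linear (not just spectral) stability are useful clarifications but do not change the argument.
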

\begin{proof}
If $\Gb\cdot(\ab\times\pb) \neq 0$, then $\theta \neq 0,\pi$.
Thus by Theorem \ref{thm:2d23d}, the system is diagonalisable and the spectrum is that of the associated two-dimensional problem. If $\ab \notin D_\pb$, as $\ab$ is in the principal domain \eqref{eq:ADomain3D} then $\ab+k\pb\notin D_\pb$ for all $k\in\mathbb{Z}$. Thus by Proposition \ref{prop:insideunstable}, $\rho_k\geq 0$ for all $k\in\mathbb{Z}$. Thus by Section 3.1 of \cite{DMW16}, the class has only imaginary spectrum and is diagonalisable
and is therefore linearly stable.
\end{proof}

Thus only finitely many classes can contribute linear instability; those that intersect the unstable ellipsoid at one of its finite interior integer lattice points or satisfy $\theta=0,\pi$. We can therefore limit our search for instability to this finite collection of classes.

\subsection{Linearly Stable Flows}

We have shown that all but finitely many classes cannot contribute linear instability. The classes that can contribute instability are exactly those that are led by $\ab$ that lie in the unstable ellipsoid. As the shape of this ellipsoid is determined by the size of the domain $\aspb$, it may be possible for certain $\pb$ to make the unstable ellipsoid sufficiently small to ensure there are no classes that contribute spectral instability. This is analogous to the result of \cite{Dullin16}; however, the existence of the nilpotent classes discussed in Section \ref{sec:linclasses} means there is an additional condition on $\Gb,\aspb$ to ensure there is  linear stability.

\begin{theorem}[Linearly Stable Shear Flows]
\label{thm:stable3D}
If $\pb=(\aspect_x p_x,0,0)$ with $p_x\in\mathbb{Z}$, $\aspect_y,\aspect_z>\aspect_x|p_x|$, and $(\aspect_z\Gamma_y)/ (\aspect_y\Gamma_z)$ is not rational, the stationary solution
\begin{equation}
	\Omega^*=2\Gb \cos (\aspect_x p_x x)
\end{equation}
of the three-dimensional Euler equations \eqref{eq:omvrelation} on the torus, \eqref{eq:omvpde}, is linearly stable.
\end{theorem}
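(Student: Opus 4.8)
The plan is to combine the three sources of potential linear instability and rule each of them out under the stated hypotheses. By the class decomposition of Section~\ref{sec:classdecomp}, the linearised operator is block-diagonal over classes indexed by $\ab\in\mathcal{A}$, so it suffices to show every class is linearly stable. A class can fail to be linearly stable in exactly three ways: (i) its leading mode $\ab$ lies in the unstable ellipsoid $D_\pb$, so that some $\rho_k<0$ and the associated two-dimensional class is spectrally unstable; (ii) $\theta\in\{0,\pi\}$, i.e. $\Gb$, $\pb$, $\ab$ are coplanar, where Theorem~\ref{thm:2d23d} does not apply and (per the forward references to Section~\ref{sec:linclasses}) a nilpotent block appears; (iii) residual nilpotency even when $\theta\neq 0,\pi$ — but this is excluded by Theorem~\ref{thm:2d23d} together with the diagonalisability of $\tilde M_1$ and $\tilde M_2$, so only (i) and (ii) remain.

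First I would dispose of (i). With $\pb=(\aspect_x p_x,0,0)\in\mathcal{L}$, the ellipsoid $D_\pb$ (in the $\aspb$-scaled lattice) has semi-axes $\aspect_x|p_x|/\aspect_x$, $\aspect_x|p_x|/\aspect_y$, $\aspect_x|p_x|/\aspect_z$ in the $x,y,z$ directions after accounting for the lattice $\mathcal{L}$; the hypotheses $\aspect_y,\aspect_z>\aspect_x|p_x|$ force the $y$- and $z$-semi-axes to be strictly less than $1$, so the only lattice points of $\mathcal{L}$ inside $\overline{D_\pb}$ have zero $y$- and $z$-coordinates, i.e. lie on the $x$-axis. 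But any such point is a rational multiple of $\pb$, and Section~\ref{sec:classdecomp} shows modes $\linco_{q\pb}$ are constant — such ``classes'' are trivial and contribute no instability. Hence by Proposition~\ref{prop:stableclasses3D} every nontrivial class with $\theta\neq 0,\pi$ is linearly stable. The one subtlety here is bookkeeping the effect of the anisotropic lattice $\mathcal{L}$ on the shape of $D_\pb$ and on the reduced parameters $\tilde a_x,\tilde a_y$; I would make the affine change of variables of Section~\ref{sec:aniso} explicit so that $D_\pb$ becomes a genuine ball, confirming the ``$y,z$-semi-axes $<1$'' claim cleanly.

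Next, (ii): the coplanar classes. Here $\Gb\cdot(\ab\times\pb)=0$. With $\pb$ along the $x$-axis, $\ab\times\pb$ is proportional to $(0,0,a_y)$ (in the scaled lattice, $a_z=0$ since $\pb$ has no $z$-component and $\ab$'s $z$-component can be freely chosen; more carefully, coplanarity of $\Gb,\pb,\ab$ with $\pb=(\ast,0,0)$ forces the plane to contain the $x$-axis, so $\Gb$ and $\ab$ have proportional $(y,z)$-parts). The condition $\Gb\cdot(\ab\times\pb)=0$ then reads $\Gamma_y a_z - \Gamma_z a_y=0$ for a lattice point $\ab=(\aspect_x a_x,\aspect_y a_y,\aspect_z a_z)$ with $a_x,a_y,a_z\in\Z$, i.e. $\aspect_y a_y \Gamma_z \cdot(\text{something}) = \aspect_z a_z\Gamma_y\cdot(\text{something})$; unwinding the lattice scalings this becomes $(\aspect_z\Gamma_y)/(\aspect_y\Gamma_z) = a_z/a_y \in\mathbb{Q}$ (when $a_y\neq 0$). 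Since $(\aspect_z\Gamma_y)/(\aspect_y\Gamma_z)$ is assumed irrational, no such lattice point exists with $a_y\neq 0$; and if $a_y=0$ then also $a_z=0$ and we are again on the $x$-axis, i.e. a trivial class. Therefore $\theta\neq 0,\pi$ for every nontrivial class, and case (ii) never occurs.

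Combining: every nontrivial class has $\theta\neq 0,\pi$ and leading mode outside $D_\pb$, hence is linearly stable by Proposition~\ref{prop:stableclasses3D}; the trivial classes $\linco_{q\pb}$ are constant. Since the linearised operator is the direct sum of these blocks, the flow is linearly stable. \emph{I expect the main obstacle to be the second step}: getting the algebra of the coplanarity condition to translate exactly into the rationality of $(\aspect_z\Gamma_y)/(\aspect_y\Gamma_z)$ requires care with how the lattice rescaling $\Z^3\to\mathcal{L}$ interacts with $\Gb$ (which, per Section~\ref{sec:aniso}, is \emph{not} rescaled) and with the reduced angle $\theta$ from Lemma~\ref{lem:redpar}; one must verify that the exceptional set $\theta\in\{0,\pi\}$ over all admissible $\ab\in\Z^3$ is precisely parametrised by rationals of that ratio. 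A secondary point needing a remark is that the trivial classes $\linco_{q\pb}$ are genuinely stable (constant, not growing), which is immediate from Section~\ref{sec:classdecomp} but should be stated so the direct sum over \emph{all} classes is covered.
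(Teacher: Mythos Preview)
Your proposal is correct and follows essentially the same argument as the paper's proof: use $\aspect_y,\aspect_z>\aspect_x|p_x|$ to force every nontrivial lattice point outside $D_\pb$, invoke the irrationality hypothesis to exclude the coplanar case $\theta\in\{0,\pi\}$ (the paper packages this step as an appeal to Lemma~\ref{lem:lattrank}), and observe that the remaining classes along $\pb$ are constant. Your flagged concern about the coplanarity algebra is warranted --- the cross product is $\ab\times\pb=(0,\,\aspect_z a_z\,\aspect_x p_x,\,-\aspect_y a_y\,\aspect_x p_x)$, so the condition reads $(\aspect_z\Gamma_y)/(\aspect_y\Gamma_z)=a_y/a_z$ rather than $a_z/a_y$ --- but this does not affect the conclusion, since rationality is preserved under inversion.
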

\begin{proof}
If $\ab=(\aspect_x{a}_x,\aspect_y{a}_y,\aspect_z a_z)$, and $a_z$ is nonzero, then 
\begin{equation}
	|\ab|^2\geq \aspect_z^2 a_z^2 > \aspect_z^2>\aspect_x^2 |p_x|^2=|\pb|^2.
\end{equation}
Thus if $a_z$ is nonzero, the class has $|\ab|>|\pb|$ and cannot contribute spectral instability by Proposition \ref{prop:stableclasses3D} (as $a_z\neq 0$ means $\ab$ cannot be parallel to $\pb=(\aspect_x p_x,0,0)$). As the conditions of Proposition  \ref{lem:lattrank} are satisfied, the class also does not contribute linear instability. Similarly, if ${a}_y$ is nonzero $|\ab|>|\pb|$ and the class cannot contribute linear instability. Thus the only classes that can contribute linear instability are of the form $\ab=(\aspect_x{a}_x,0,0)=\frac{\aspect_x{a}_x}{p_x}\pb$. But then the dynamics are trivially stable as discussed in Section \ref{sec:classdecomp} and cannot contribute instability. Thus there are no classes that contribute linear instability, and the shear flow is linearly stable.
\end{proof}

A corresponding result holds for shear flows with $\pb=(0,p_y,0)$ or $\pb=(0,0,p_z)$ and the equivalent conditions on $\aspb$. We will show in Section \ref{sec:parunstable} that although some steady states are linearly stable, in three dimensions no steady states are \emph{parametrically} stable.

As an aside, note that if we relax the condition that $\Gb\cdot(\ab\times\pb)  \neq 0$ for all $\ab\in\mathbb{Z}^3$, then there exist nilpotent classes with unstable linear dynamics  as in Section \ref{sec:linclasses}. However, all classes are still spectrally stable as all other classes are block diagonal and all blocks have only imaginary spectrum. Then there is spectral but not linear stability. 

\section{Linearly Unstable Shear Flows}

\subsection{Linear Growth Classes}
\label{sec:linclasses}

The matrix $\tilde{\mathcal{M}}$ is singular when $\theta=0,\pi$ as $\sin \theta = 0$. 
This is the case excluded in Theorem \ref{thm:2d23d}. In this special case the dynamics of the three-dimensional system is 
fundamentally different from the two-dimensional system. It may appear as if the condition $\theta = 0,\pi$ is very special;
however, we will see that in a precise sense every one of the steady states we are studying here contains a class 
arbitrarily close to this situation. Since the transformation $T$ \eqref{eq:TTran} is singular when $\theta = 0, \pi$, 
we go back one step and consider the matrix $\mathcal{M}$. 

\begin{theorem}
Assume that the vectors $\Gb$, $\pb$, and $\ab$ are linearly dependent.
Then the operator of the corresponding class in the linearised Euler equation $\mathcal{M}$ 
given by \eqref{eq:3DM1} is nilpotent and  $\exp( \mathcal{M} t)$ is unbounded.
\end{theorem}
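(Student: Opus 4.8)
The plan is to exploit the case $\theta = 0, \pi$ directly at the level of $\mathcal{M}$ rather than $\tilde{\mathcal{M}}$, since the similarity transformation by $T$ breaks down precisely here. When $\Gb$, $\pb$, $\ab$ are linearly dependent we have $\Gb\cdot(\pb\times\ab) = 0$, hence $\sin\theta = 0$, so by \eqref{eq:3DM1} the matrix reduces to $\mathcal{M} = \cos\theta \begin{pmatrix} 0 & M_3 \\ 0 & 0 \end{pmatrix}$ (with $\cos\theta = \pm 1$). This is already in block upper-triangular form with zero diagonal blocks, so $\mathcal{M}^2 = 0$ and $\mathcal{M}$ is nilpotent of index at most $2$. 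First I would record this observation and note $\cos\theta = \pm 1 \neq 0$ in this case (the degenerate case $\tilde a_y = 0$, i.e. $\pb \parallel \ab$, is excluded since such classes have trivial dynamics by Section~\ref{sec:classdecomp}, so $M_3 \neq 0$).

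Next I would show $\mathcal{M} \neq 0$, which is what makes $\exp(\mathcal{M}t)$ unbounded. Since $\mathcal{M}^2 = 0$ we have the exact formula $\exp(\mathcal{M}t) = \mathbb{I} + t\mathcal{M}$, so $\exp(\mathcal{M}t)$ is unbounded as $t\to\infty$ if and only if $\mathcal{M}\neq 0$. The matrix $M_3$ has entries $\alpha_{n\mp1}/\alpha_n^2$ which are all strictly positive (as $\alpha_n \neq 0$ for all $n$ because $\ab, \pb$ are not parallel), so $M_3 \neq 0$, hence $\mathcal{M}\neq 0$. Thus $\|\exp(\mathcal{M}t)\| \geq t\,\|\mathcal{M}\| \to \infty$, giving linear instability.

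I would then connect back to the geometric hypothesis to make the statement clean: linear dependence of $\Gb, \pb, \ab$ is equivalent to the vanishing of the scalar triple product $\Gb\cdot(\pb\times\ab)$, which by the formula for $\sin\theta$ in Lemma~\ref{lem:redpar} is exactly the condition $\sin\theta = 0$, i.e. $\theta \in \{0,\pi\}$; and in that situation $\Gb$ lies in the plane spanned by $\pb$ and $\ab$, consistent with the reduced form $\Gb = (0, \pm\Gamma, 0)$. It is worth remarking that the nilpotency here is genuinely index $2$ — a single Jordan block structure repeated — which is the source of the secular (linear-in-$t$) growth, and this is the three-dimensional effect absent in the two-dimensional problem where the analogous block has no such coupling term.

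The only mild subtlety — not really an obstacle — is making sure the block decomposition in \eqref{eq:3DM1} is literally valid in the degenerate case, i.e. that we may still reduce to the divergence-free subspace and obtain $\mathcal{M}$ in that block form before invoking $\sin\theta = 0$; this is fine because the reduction in Section~\ref{sec:simplifying1} up to and including the derivation of $\mathcal{M}$ only used that $\ab,\pb$ are non-parallel (so that $\alpha_n\neq 0$ and $R(n)$ is well-defined), and never divided by $\sin\theta$ — only the subsequent passage to $\tilde{\mathcal{M}}$ via $\mathcal{T}$ required $\cot\theta$ to exist. So the whole argument is: reduce to $\mathcal{M}$, set $\sin\theta=0$, observe block-nilpotency and $\mathcal{M}\neq0$, conclude $\exp(\mathcal{M}t) = \mathbb{I} + t\mathcal{M}$ is unbounded.
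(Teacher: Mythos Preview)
Your proposal is correct and follows essentially the same approach as the paper: observe $\sin\theta=0$ from the linear dependence, read off the block upper-triangular form of $\mathcal{M}$ from \eqref{eq:3DM1}, conclude $\mathcal{M}^2=0$, and deduce $\exp(\mathcal{M}t)=\mathbb{I}+t\mathcal{M}$ is unbounded. Your version is in fact slightly more careful than the paper's own proof, since you explicitly verify $M_3\neq 0$ (needed for unboundedness) and flag the trivial case $\ab\parallel\pb$ as excluded, both of which the paper leaves implicit.
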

\begin{proof}
By assumption of linear dependence  we have that $\Gb \cdot ( \pb \times \ab) = 0$.
According to Lemma~\ref{lem:redpar} this implies that the angle $\theta$ satisfies $\theta = 0$ or $\pi$.
In this case, $\mathcal{M}$ defined by \eqref{eq:3DM1} is nilpotent since it is block-upper triangular and hence $\mathcal{M}^2=0$. 
Thus the associated solutions to \eqref{eq:ombode} are linear in $t$ and given by
\begin{equation}
\label{eq:lingro}
	\omb(t)=\exp( \mathcal{M} t) \omb(0) = \left(\mathbb{I}+\tilde{a}_y\begin{pmatrix} 0 & M_3 \\ 0 & 0 \end{pmatrix}t \right)\omb(0).
\end{equation}
Thus the  dynamics of this class is linearly unstable. The linearised equation exhibits growth that is not caused by 
hyperbolic eigenvalues, but instead by a nilpotent operator. The corresponding growth of the solution in time is 
at least linear in $t$.
\end{proof}

An important question is whether, given $\pb$, $\Gb$, and $\aspm$, for any actual lattice points  $\ab\in\mathbb{Z}^3$ (rather than the reduced parameters $\tilde{a}_x, \tilde{a}_y\in\mathbb{R}$) the equation $\sin\theta=0$ holds. For now we consider the isotropic case. If $\ab= k\pb$ for some $k\in\mathbb{R}$, then the condition is automatically satisfied, but these classes only consist of constant modes by the argument in Section \ref{sec:classdecomp} and cannot contribute instability.  
Note that normalisation of parameters assumes that $\ab$ is given and hence $\theta$ is determined, 
while here we are asking the question whether for a given $\Gb$ there exists some $\ab$ such that $\theta = 0, \pi$.

We can think of this geometrically. Consider the plane through the origin that contains 
the direction $\pb$ and the direction $\Gb$. 
Does this plane contain lattice points other than those on the ray determined by $\pb$?
A normal vector $\mathbf{n}$ to the plane is given by $\mathbf{n} = \Gb \times \pb$,
and the equation for $\ab \in \Z^3$ to lie on that plane becomes $\mathbf{n} \cdot \ab = 0$.
In general the solution of a linear homogeneous diophantine equation is a lattice.
Since $\pb$ is an integer vector in the plane the rank of this lattice is at least one. 
The rank cannot be more than two since the lattice is contained in the plane.
When the rank is 2 then this implies that there are lattice points $\ab$ and hence
classes for which $\Gb$, $\pb$, $\ab$ are coplanar and hence $\theta = 0, \pi$.

\begin{lemma}
\label{lem:lattrank}
The rank of the lattice of solutions $\ab \in \Z^3$ of the plane 
$\mathbf{n} \cdot \ab = 0$ where $\mathbf{n} = \Gb \times \pb$, $\pb \in \Z^3$,
is 2 if and only if $\Gb$ is proportional to an integer vector.
\end{lemma}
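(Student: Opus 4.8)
The plan is to analyze the lattice $\mathcal{L}_{\mathbf n} = \{\ab \in \Z^3 : \mathbf{n}\cdot\ab = 0\}$ where $\mathbf{n} = \Gb\times\pb$, splitting into two directions. Throughout, note that $\pb \in \Z^3$ is a nonzero integer vector lying in the plane $\mathbf{n}\cdot\ab = 0$ (since $\mathbf{n}\cdot\pb = (\Gb\times\pb)\cdot\pb = 0$), so the rank of $\mathcal{L}_{\mathbf n}$ is always at least $1$, and at most $2$ since the lattice is contained in a $2$-plane. The claim is that rank equals $2$ exactly when $\Gb$ is proportional to an integer vector. One should also dispose of the degenerate case where $\Gb$ and $\pb$ are parallel: then $\mathbf{n} = \mathbf 0$, the ``plane'' is all of $\R^3$, the solution lattice is $\Z^3$ of rank $3$, and $\Gb \parallel \pb \in \Z^3$ is (proportional to) an integer vector; so one may assume $\Gb, \pb$ linearly independent, making $\mathbf{n}\neq\mathbf 0$ and the solution set a genuine $2$-plane.

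For the ``if'' direction, suppose $\Gb = \lambda\qb$ with $\qb \in \Z^3$ and $\lambda \in \R\setminus\{0\}$. Then $\mathbf{n} = \Gb\times\pb = \lambda(\qb\times\pb)$, and $\qb\times\pb \in \Z^3$; rescaling $\mathbf{n}$ by a nonzero scalar does not change the solution set, so $\mathcal{L}_{\mathbf n} = \{\ab \in \Z^3 : (\qb\times\pb)\cdot\ab = 0\}$ is the kernel of a single nonzero integer linear functional on $\Z^3$. A standard fact (Smith normal form, or the structure theory of finitely generated abelian groups) says that the kernel of a surjection $\Z^3 \to \Z$ — equivalently, the kernel of any nonzero homomorphism $\Z^3 \to \Z$ after dividing by the gcd of its coefficients — is a free abelian group of rank $2$. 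Since $\qb\times\pb \neq \mathbf 0$ (as $\Gb,\pb$ are independent), this gives rank exactly $2$.

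For the ``only if'' direction, suppose $\mathcal{L}_{\mathbf n}$ has rank $2$. Then $\mathcal{L}_{\mathbf n}$ contains two $\Z$-linearly independent integer vectors $\ab_1, \ab_2$; since both lie in the $2$-plane $\mathbf{n}^\perp$, they span it over $\R$. Hence $\mathbf{n}$ is proportional to $\ab_1\times\ab_2 \in \Z^3$, i.e. $\mathbf{n} = \mu\rb$ for some integer vector $\rb = \ab_1\times\ab_2 \neq \mathbf 0$ and some $\mu \in \R\setminus\{0\}$. Now I must recover that $\Gb$ itself is proportional to an integer vector from the fact that $\Gb\times\pb$ is. Here I use the constraint $\Gb\cdot\pb = 0$ (the divergence-free condition on $\omb_\pb$, which holds for our steady states): decompose $\Gb$ in the basis adapted to $\pb$. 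Since $\Gb \perp \pb$, we have $\Gb = \frac{1}{|\pb|^2}\,\pb\times(\Gb\times\pb) = \frac{\mu}{|\pb|^2}\,(\pb\times\rb)$, and $\pb\times\rb \in \Z^3$. Thus $\Gb$ is a real scalar multiple of the integer vector $\pb\times\rb$, as required. (One checks $\pb\times\rb \neq \mathbf 0$: if it vanished then $\rb \parallel \pb$, forcing $\mathbf{n} = \mu\rb \parallel \pb$, but $\mathbf{n} = \Gb\times\pb \perp \pb$, so $\mathbf{n} = \mathbf 0$, contradicting $\Gb,\pb$ independent.)

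I expect the main subtlety — not so much an obstacle as the point requiring care — to be the ``only if'' direction: the hypothesis directly controls only $\mathbf{n} = \Gb\times\pb$, and one must leverage the orthogonality $\Gb\cdot\pb = 0$ together with the vector triple product identity $\pb\times(\Gb\times\pb) = |\pb|^2\Gb - (\pb\cdot\Gb)\pb = |\pb|^2\Gb$ to pass back from $\Gb\times\pb$ being rational-directional to $\Gb$ itself being rational-directional. In the anisotropic setting one replaces $\Z^3$ by the lattice $\mathcal{L}$ and $\pb \in \mathcal{L}$, running the identical argument with $\mathcal{L}$ in place of $\Z^3$ and ``proportional to an element of $\mathcal{L}$'' in place of ``proportional to an integer vector''; the rank-$2$-kernel fact holds verbatim for any rank-$3$ lattice.
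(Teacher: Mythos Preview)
Your proof is correct and follows essentially the same route as the paper: both directions hinge on the same triple-product identity $\pb\times(\Gb\times\pb)=|\pb|^2\Gb$ (using $\Gb\cdot\pb=0$) to pass between integrality of the normal $\mathbf{n}$ and integrality of the direction of $\Gb$. Your write-up is somewhat more careful—handling the degenerate parallel case explicitly and invoking Smith normal form for the rank-$2$ kernel fact rather than exhibiting two independent lattice vectors directly—but the core argument is the same.
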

\begin{proof}
The rank of the lattice is at least one since $\pb \in \Z^3$.
Clearly when $\Gb$ is proportional to an integer vector the rank of the lattice is 2,
since $\pb$ and $\Gb$ are linearly independent.
Conversely, choose an $\ab \in \Z^3$ and assume it to be on the plane.
Thus a normal to the plane is $\pb \times \ab$.
Then $\Gb$ has the unique direction which is orthogonal to $\pb$ 
(by the divergence free condition) and orthogonal to $\pb \times \ab$, 
and therefore the direction of $\Gb$ is given by $\pb \times (\pb \times \ab)$.
Thus we have shown that the rank is 2 if and only if $\Gb$ is proportional 
to an integer vector.
\end{proof}

It is easy to incorporate the anisotropic case in all of the results in this section, 
as usual by considering $\pb$ and $\ab$ in $\mathcal{L}$ rather than $\mathbb{Z}^3$.
Note that this means that instead of perturbing $\Gb$ to achieve $\theta = 0, \pi$ for 
some $\ab$, one can keep $\Gb$ fixed and perturb $\aspm$ to the same end.

The question of when $\theta = 0, \pi$ is possible is most interesting for vectors 
$\pb$ such that the linearisation about the corresponding steady state is at 
least spectrally stable, because then a nilpotent linearisation leads  to linear instability.
From Proposition \ref{prop:stableclasses3D} we know that this implies that 
no lattice point is inside the unstable ellipsoid (as there would be an unstable class otherwise), 
and this implies that up to permuting axis 
we have $\pb=(p_x,0,0)$. By the divergence free condition $\Gamma_x = 0$,
and so the condition of Theorem \ref{thm:stable3D} (incorporating the $\aspect$ factors) is
that the ratio $\aspect_z \Gamma_y : \aspect_y \Gamma_z$ is rational.
We will discuss an interpretation of these classes in Section \ref{sec:nearnil}.

When $\Gb$ is not proportional to an integer vector it is interesting to ask 
the question how close one can get to a lattice point (not on the line through $\pb$).
This is a problem in diophantine approximation. In the particular case where $\pb = (p_x, 0, 0)$ 
and $\Gb = \Gamma ( 0, \cos\psi, \sin\psi)$ and $\ab$ are the original un-rotated vectors
the formula determining $\theta$
simplifies to
\[
    \tan \theta = \frac{{a}_y \sin\psi - a_z \cos\psi}{{a}_y \cos\psi + a_z \sin\psi}
\]
or, as usual, when incorporating $\aspb$ the components of $\ab$ get multiplied by the scale factors.
When the numerator becomes small $\tan\theta$ is small, and hence we are trying to find 
an approximation of $\tan\psi$ by the rational number $\sfrac{a_z}{{a}_y}$. 
To measure how well a number $x=\tan\psi$ can be approximated by rationals 
define the limit
\[
     c(x) = \lim_{q \to \infty}  q^2 \left| x - \frac{p}{q} \right| \,,
\]
where $p$ is nearest integer to $q x$ with $q\in\mathbb{Z}$. A number is called badly approximable,
see, e.g., \cite{Burger00}, if $c(x)$ is non-zero.
In our case this can be rewritten as
\[
    c \cos\psi  \approx  {a}_y |{a}_y \sin\psi - a_z \cos\psi| 
\]
and assuming that $0 < \psi < \pi/2$ we find
\[
     \frac{c}{{a}_y^2} \cos^2\psi  \approx \frac{c }{{a}_y^2 } \frac{1}{ 1 + \tan\psi \frac{a_z}{{a}_y}}  \approx  | \tan\theta | = \frac{1}{\eta} \,.
\]
Below we will show that the growth rate of a nilpotent (or nearly nilpotent) operator is $\eta / ({a}_y^2 + a_z^2)$
so that combining the results we get that the growth rate is $1/c$.
For a badly approximable number  the asymptotic growth rate of nearly nilpotent operators is finite,
while for a well approximable number the growth rate diverges. 
This appears to indicate that the behaviour of the linearised operator does not only depend
on whether the slope of $\Gb$ given by $\tan\psi$ is rational or irrational, but that in the case 
that it is irrational the number-theoretic properties of $\tan\psi$ become important.

\subsection{Parametric Instability}
\label{sec:parunstable}

In order to emphasise the fact that linearly stable and linearly unstable steady states are arbitrarily close to each other 
we use a variant of concept of parametric instability of matrices. Recall that a matrix is called parametrically unstable if
it can be made unstable by an arbitrarily small perturbation. Since we are in infinite dimensions we restrict the 
possible perturbations to be within our family of steady states.
\begin{definition}
We call a member of a family of steady states \emph{parametrically unstable} if by an arbitrary small perturbation
within the family the steady state can be made linearly unstable.
\end{definition}
Sometimes the term \emph{strong stability} is used do designate the absence of parametric instability.

The last theorem shows that  cases of linear instability for fixed $\mathbf p$ are dense in the set of allowed $\Gb$.
This  observation leads to the following theorem:
\begin{theorem}
Any steady state of the form \eqref{eqn:OmegaStar} in the three-dimensional Euler equations on the torus is parametrically unstable.
\end{theorem}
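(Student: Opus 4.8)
The plan is to exploit the density result established just above: for fixed $\pb$, the set of $\Gb$ leading to linear instability is dense in the set of admissible $\Gb$ (those with $\pb \cdot \Gb = 0$). Concretely, given a steady state \eqref{eqn:OmegaStar} with parameters $\pb$, $\Gb$, I would produce, for any $\varepsilon > 0$, a nearby admissible $\Gb'$ with $|\Gb' - \Gb| < \varepsilon$ such that the steady state with parameters $\pb$, $\Gb'$ is linearly unstable. This immediately gives parametric instability in the sense of the definition, since the perturbed state is again of the form \eqref{eqn:OmegaStar} and hence within the family.

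The key step is the construction of $\Gb'$. Since $\pb \in \mathbb{Z}^3$ (or $\mathcal{L}$ in the anisotropic case) and $\Gb$ lies in the plane $\pb \cdot \Gb = 0$, which is a rational plane containing the rank-$1$ sublattice generated by $\pb$, I can choose an integer vector $\mathbf{m} \in \mathbb{Z}^3$ with $\pb \cdot \mathbf{m} = 0$ and $\mathbf{m}$ not parallel to $\pb$ (such an $\mathbf{m}$ exists because the lattice of solutions of $\pb \cdot \ab = 0$ has rank $2$). The integer vectors lying in this plane and orthogonal to $\pb$ that are \emph{proportional to an integer vector} form a dense subset of the line $\{\Gb : \pb \cdot \Gb = 0\}$ in the two-dimensional orthogonal complement sense — more precisely, rational directions are dense among all directions in this plane. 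Hence I can pick $\Gb' = \lambda \mathbf{m}'$ for a suitable integer vector $\mathbf{m}'$ in the plane and scalar $\lambda$, with $|\Gb' - \Gb| < \varepsilon$. By Lemma~\ref{lem:lattrank}, since $\Gb'$ is proportional to an integer vector, the lattice of solutions of $(\Gb' \times \pb) \cdot \ab = 0$ has rank $2$, so there exists $\ab \in \mathbb{Z}^3$ (equivalently $\mathcal{L}$), not on the line through $\pb$, with $\Gb'$, $\pb$, $\ab$ coplanar, i.e. linearly dependent. By the Theorem immediately preceding the definition of parametric instability, the operator $\mathcal{M}$ of the corresponding class is nilpotent and $\exp(\mathcal{M}t)$ is unbounded, so the steady state with parameters $\pb$, $\Gb'$ is linearly unstable.

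I would then assemble these pieces: fix an arbitrary steady state \eqref{eqn:OmegaStar}; apply the density argument to obtain arbitrarily close linearly unstable steady states within the family; conclude parametric instability directly from the definition. I should also remark that this covers all cases uniformly — whether or not the original steady state was itself linearly stable (as in Theorem~\ref{thm:stable3D}) is irrelevant, since the perturbation argument is independent of the spectral picture of the base state; in fact when $\pb$ is parallel to a coordinate axis the only obstruction to instability at $\Gb$ is the rationality condition on the ratio of components of $\Gb$, and irrational ratios are precisely the ones that are approximable by rationals, making the density transparent.

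The main obstacle I anticipate is purely a matter of bookkeeping rather than substance: verifying that the approximating integer-proportional vectors $\Gb'$ can be chosen (i) in the correct plane $\pb^\perp$, (ii) arbitrarily close to $\Gb$, and (iii) so that the resulting coplanar lattice point $\ab$ genuinely lies off the line through $\pb$ (otherwise the class is trivial and contributes no instability, as noted in Section~\ref{sec:classdecomp}). Point (iii) is automatic once the solution lattice has rank $2$, since a rank-$2$ lattice necessarily contains points off any given line; and (i)--(ii) reduce to the elementary fact that the rational points are dense on the circle of directions in the rational plane $\pb^\perp$. So the proof is short, and essentially a corollary of Lemma~\ref{lem:lattrank} together with the nilpotency theorem.
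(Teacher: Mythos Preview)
Your proposal is correct and follows essentially the same route as the paper: fix $\pb$, use Lemma~\ref{lem:lattrank} together with the density of integer-proportional directions in the rational plane $\pb^\perp$ to find a nearby $\Gb'$ for which some class is nilpotent, and invoke the nilpotency theorem to conclude linear instability of the perturbed state. Your write-up is in fact more explicit than the paper's (which simply asserts density and cites Lemma~\ref{lem:lattrank}); in particular you spell out why the rank-$2$ lattice furnishes an $\ab$ off the line through $\pb$, and you note that the already-unstable case is trivially covered.
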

\begin{proof}
Assume that the linearised equations only have continuous spectrum for the given values 
of $\mathbf p$, $\mathbf \Gamma$ and $\aspm$. Since $\mathbf p\in\mathcal{L}$ is discrete it is fixed.
In Lemma \ref{lem:lattrank} we showed that linearly unstable cases are dense in the space of all 
$\Gb$ and $K$. Thus, arbitrarily close to the given stable steady state there is 
a linearly unstable steady state. More precisely, for any given size of perturbation 
$\epsilon$ in the parameters, there are parameters for which some mode exhibits linear
growth.
\end{proof}

The mode number which corresponds to the growing operator may of course 
be very high depending on the size of the perturbation and the resulting $\frac{1}{c}$ growth rate. 
An interesting quantity to consider is  how the mode-number grows 
when sending the perturbation $\epsilon$ to zero.
The previous theorem explains that in a precise sense the linearly stable steady states are
in fact not stable in the sense of parametric instability. 
Another interpretation of the lurking instability in the problem is to realise that even 
linearly stable classes may lead to transient growth; this is explored in the following section.

\subsection{Nonnormality and Transition to Turbulence}

\label{sec:nearnil}

In Proposition \ref{prop:stableclasses3D} it was shown that if $\Gb\cdot(\ab\times\pb) \neq 0$ the class led by $\ab$ is not nilpotent and does not  contribute linear growth terms. Futhermore, in Proposition \ref{lem:lattrank} it was shown that there exist choices of the parameters $\pb,\Gb,\aspb$ such that for all  $\ab\in \mathcal{L}$ not collinear with $\pb$, $\Gb\cdot(\ab\times\pb)  \neq 0$.  However, if we consider all $\ab\in\mathcal{L}$, values of $\ab\in\mathcal{L}$ can be found such that $\Gb\cdot(\ab\times\pb) $ is arbitrarily small. At first sight this does not affect our analysis. We can still apply the transformation  as in Lemma \ref{lem:laxpair}, remove the influence of the $\mathcal{M}_3$ term, and make conclusions about spectral stability or instability. Nevertheless, there is an important point to be made here with implications for the dynamics of the full system. 

For small values of values of $\Gb\cdot(\ab\times\pb)  $ the parameter $\eta$ is correspondingly large. For all nonzero values of $\eta$, the matrix $\tilde{\mathcal{M}}$ \eqref{eq:goodmeq} is nonnormal, see, e.g.,  \cite{Trefethen05}, as the commutator of $\tilde{\mathcal{M}}$ and its conjugate transpose is
\begin{equation}
\begin{split}
	[\tilde{\mathcal{M}}&,\tilde{\mathcal{M}}^{\transp}] =\tilde{\mathcal{M}}\tilde{\mathcal{M}}^{\transp} - \tilde{\mathcal{M}}^{\transp}\tilde{\mathcal{M}}\\
	&=	\begin{pmatrix}
		\eta^2\tilde{M}_3\tilde{M}_3^{\transp} & \eta (\tilde{M}_3\tilde{M}_2^{\transp} -\tilde{M}_1^{\transp} \tilde{M}_3) \\
		\eta ( \tilde{M}_2\tilde{M}_3^{\transp}-\tilde{M}_3^{\transp}\tilde{M}_1) &
			(\tilde{M}_2^{\transp} \tilde{M}_2 - \tilde{M}_2 \tilde{M}_2^{\transp} )-\eta^2 \tilde{M}_3^{\transp} \tilde{M}_3 
\end{pmatrix}\sin^2 \theta\\ 
&\neq 0.
\end{split}
\end{equation}
One can think of $\tilde{\mathcal{M}}$ as becoming ``more nonnormal'' for larger values of $\eta$. The concept of a measure of nonnormality is discussed in \cite{Elsner87}. A nonnormal matrix may be linearly stable, but have dramatic transient behaviour that can cause instability in the full nonlinear system. In Boberg and Brosa \cite{Boberg88}, it is shown how turbulence can arise from shear flows in the Navier-Stokes equation in a pipe. They showed that small transient instabilities that can occur in the linearised problem can propagate into larger instabilities in the nonlinear problem, leading to turbulence. This instability is attributed to the degeneracy or near-degeneracy of eigenvalues; if two eigenvalues are very close, the transient dynamics on a short timescale may grow dramatically due to near-parallel eigenfunctions. In Trefethen et al \cite{Trefethen93} (see also \cite{Trefethen05}) it is shown that this behaviour can more generally be attributed to the nonnormality of the associated operator. This transition from laminar to turbulent flow is of much physical interest, and so has been well-studied \cite{Eckhardt07}.

Thus for classes that are spectrally stable as in  Proposition \ref{prop:stableclasses3D}, nonnormality can cause transient dynamics that are stable in the linearised problem, but are harbingers of nonlinear instability. Note that this occurs for sufficiently small values of $\Gb\cdot(\ab\times\pb) $; as $\ab$ must take all values in $\mathcal{L}=\aspm\mathbb{Z}^3$, this quantity can be made arbitrarily small by selecting appropriate $\ab$. Studying this in the context of 
nonlinear transition to turbulence of shear flows in a three-dimensional domain is a very interesting problem.

\section{Hyperbolic  Shear Flows}

In the previous sections we discussed linearly stable shear flows and spectrally stable shear flows that are linearly unstable.
The latter case is the truly three-dimensional novel case that has no analogue in two dimensions. 
Moreover, it implies that all linearly stable shear flows are arbitrarily close to linearly unstable cases in the sense of 
parametric instability. Non-linear stability results as in the two-dimensional case are therefore appear to be impossible 
in the three-dimensional case. As usual instability is structurally stable, and therefore results about  
instability with hyperbolic eigenvalues that hold in two dimensions do cary over to the three-dimensional case. 
Our analysis in the following is similar to \cite{DMW16}, but we expect that the more general and more rigorous results 
obtained in \cite{DLMVW19} would also carry over.

\subsection{Classifying Hyperbolic Eigenvalues}

\label{sec:nonim3D}

Having discussed the range of parameters $\ab$ for which there are no nonimaginary eigenvalues, we now look at the larger set of parameters for which we can identify nonimaginary parts. In many cases we can prove the existence of a real eigenvalue above a given lower bound. Based on numerical observations, we can classify the number and type of nonimaginary eigenvalues in these cases. Unless otherwise stated, we are in the anisotropic case with $\ab,\pb\in \mathcal{L}$. Assume that $\Gb\cdot(\ab\times\pb)  \neq 0$, so we are not in a nilpotent class with linear instability. Then we make the following conjectures based on numerical evidence: 
\begin{itemize}
	\item If $\ab\in D_\pb$ but $\ab+k\pb \notin D_\pb$ for all nonzero $k\in\mathbb{Z}$ the associated class has two real nonzero eigenvalues. 
	\item if $\ab\in D_\pb$, $\ab+\pb \in D_\pb$ or $\ab-\pb \in D_\pb$, but $\ab+k\pb\notin D_\pb$ for all other values of $k$, the associated class has either four real nonzero eigenvalues or a nonzero complex quadruplet of eigenvalues.
\end{itemize}
These observations are in direct analogy to the observations in \cite{DMW16,Dullin16,Dullin18} and are illustrated in Figure \ref{fig:ueAll}. Under particular conditions on $\pb$ we can now prove that there is a class with a positive real eigenvalue, leading to linear instability

\subsection{Unstable Shear Flows}

We now prove that, under a simple condition on $\pb$ we can find an $\ab$ such that the class led by $\ab$ has a positive real eigenvalue. This then implies linear instability.

\begin{theorem}[Unstable Shear Flows]
\label{thm:unsteady3D}
The steady state
\begin{equation}
	   \Omega^*=2\Gb \cos ( \pb\cdot\xb )
\end{equation}
is linearly unstable for all $\pb$ such that
\begin{equation}
	|\pb|>\sqrt{3}+\frac{3}{2}.
\end{equation}	
\end{theorem}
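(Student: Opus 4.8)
The plan is to exhibit, for every admissible $\pb$ with $|\pb| > \sqrt 3 + 3/2$, a single lattice point $\ab \in \mathcal A$ leading a class with a positive real eigenvalue; by Proposition~\ref{prop:stableclasses3D} and the class decomposition, one such class suffices for linear instability. By the rotation and rescaling reductions of Section~\ref{sec:simplifying1} (Lemma~\ref{lem:redpar}) and by Theorem~\ref{thm:2d23d}, it is enough to produce an $\ab$ for which (i) $\Gb\cdot(\ab\times\pb)\neq 0$, so $\theta\neq 0,\pi$ and the class reduces to a two-dimensional class, and (ii) the reduced two-dimensional class with parameters $\tilde a_x,\tilde a_y$ and $\Gamma=\tilde a_y\sin\theta$ has a positive real eigenvalue. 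For (ii) the natural criterion is the one recorded in Figure~\ref{fig:ueAll} (and proven in \cite{Li00,DMW16}): if $\ab\in D_\pb$ while $\ab\pm\pb\notin D_\pb$, i.e. $|\ab| < |\pb|$ but $|\ab+\pb|,|\ab-\pb|\geq |\pb|$, the class has exactly two real nonzero eigenvalues. So the crux is purely geometric: find an integer point strictly inside the ball of radius $|\pb|$ whose two translates by $\pm\pb$ lie outside it.

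First I would reduce (i) to a genericity remark: the set of $\ab$ with $\Gb\cdot(\ab\times\pb)=0$ is the intersection of $\mathbb Z^3$ with a plane through the origin containing $\pb$; if the point I construct happens to lie on that plane I can replace it by a neighbouring integer point differing by a vector not in the plane, and the perturbation can be chosen small enough not to disturb the strict inequalities in (ii) — or, more cleanly, I would first establish that there is an \emph{open} region of real $(\tilde a_x,\tilde a_y)$ giving a real eigenvalue and note this region contains more than one integer point of the relevant class, at least one of which is off the bad plane. Second, and this is the heart of the argument, I would solve the lattice-point problem. Write $\ab\cdot\pb = s$; the condition $\ab\in\mathcal A$ forces $-\tfrac12|\pb|^2 < s \le \tfrac12|\pb|^2$, and $|\ab\pm\pb|^2 = |\ab|^2 \pm 2s + |\pb|^2$, so $|\ab\pm\pb|\ge|\pb|$ becomes $|\ab|^2 \ge \mp 2s$, i.e. $|\ab|^2 \ge 2|s|$; together with $|\ab|<|\pb|$ we need an integer $\ab$ with $2|\ab\cdot\pb| \le |\ab|^2 < |\pb|^2$.

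The strategy for producing such an $\ab$ is to take it almost orthogonal to $\pb$ and of length roughly between $\sqrt{2|s|}$ and $|\pb|$. Concretely, decompose $\pb$ by its largest component; one can always choose $\ab$ with $\ab\cdot\pb$ equal to $0$ or $\pm 1$ (e.g. a suitable difference of standard basis vectors, or a small combination realizing a near-orthogonal direction) while keeping $|\ab|$ small, say $|\ab|^2 \in \{1,2,3\}$. With $|\ab\cdot\pb|\le 1$ the outside conditions read $|\ab|^2 \ge 2$, so $|\ab|^2\in\{2,3\}$ works, and the inside condition needs $|\ab|^2 < |\pb|^2$, which is automatic once $|\pb|^2 > 3$, i.e. $|\pb| > \sqrt 3$. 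The extra slack in the stated bound, $|\pb| > \sqrt 3 + 3/2$, is precisely what is needed to guarantee that such a near-orthogonal short integer vector actually exists inside the principal domain $\mathcal A$ for \emph{every} direction of $\pb$ — the $3/2$ accounts for the worst-case failure of integer vectors to be exactly orthogonal to $\pb$, which I would make quantitative by bounding, for the given $\pb$, the minimal $|\ab\cdot\pb|$ over short integer $\ab$, or by a direct case analysis on the residues of the components of $\pb$.

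The main obstacle I anticipate is exactly this last uniformity over all $\pb\in\mathbb Z^3$ (and, in the anisotropic case, over $\mathbb{L}$): ad hoc choices of $\ab$ work for generic $\pb$ but one must rule out bad directions where every short integer vector has a large inner product with $\pb$, and must simultaneously keep $\ab$ inside $\mathcal A$ and off the degenerate plane $\Gb\cdot(\ab\times\pb)=0$. I expect the cleanest route is: (a) prove a lemma that for $|\pb| > \sqrt 3 + 3/2$ there is $\ab\in\mathbb Z^3$ with $|\ab\cdot\pb| \le 1$ and $2 \le |\ab|^2 < |\pb|^2$ and $|\ab|^2 \le 3$, possibly by projecting $\pb$ to a coordinate hyperplane and using that a vector of squared length $\le 3$ can be aligned to within inner product $1$ of any rational direction up to a bounded correction; (b) check $\ab\in\mathcal A$, shifting by multiples of $\pb$ if necessary; (c) invoke the two-real-eigenvalue criterion; (d) handle (i) by the perturbation remark above. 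Everything after (a) is routine given the earlier results; step (a) is where the constant $\sqrt 3 + 3/2$ is earned.
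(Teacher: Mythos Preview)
Your approach differs from the paper's and has two genuine gaps.

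First, the instability criterion you invoke --- that $|\ab|<|\pb|$ together with $|\ab\pm\pb|\ge|\pb|$ already forces a pair of real eigenvalues --- is presented in this paper only as a numerically supported conjecture (Section~\ref{sec:nonim3D}). The \emph{proven} criterion, Theorem~8 of \cite{DMW16} as used in the paper's proof, requires in addition $\rho_0+\rho_2<0$; equivalently, $\ab$ must lie sufficiently deep inside the unstable ellipsoid, not merely inside it. Your lattice-point search is calibrated to the weaker, unproven condition.

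Second, and independently, your Lemma~(a) is false. Take $\pb=(1,4,16)$, so $|\pb|=\sqrt{273}\gg\sqrt3+\tfrac32$. Every integer vector with $|\ab|^2\le 3$ has entries in $\{-1,0,1\}$; a direct check over all sign choices and permutations of $(1,1,0)$ and $(1,1,1)$ gives $\min|\ab\cdot\pb|=3$, attained at $\ab=(1,-1,0)$. Hence no $\ab$ with $|\ab|^2\in\{2,3\}$ satisfies $|\ab\cdot\pb|\le 1$. The heuristic that ``the $3/2$ accounts for the worst-case failure of integer vectors to be exactly orthogonal to $\pb$'' does not survive such examples, and the constant $\sqrt3+\tfrac32$ cannot arise from this mechanism.

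The paper's route sidesteps both issues by using $\Gb$ rather than trying to be uniform in it. One seeks $\ab$ in the ball
\[
E=\Bigl\{\xb:\ |\xb-\cb|<\bigl(\tfrac{2}{\sqrt3}-1\bigr)|\pb|\Bigr\},\qquad \cb=\tfrac{1}{\sqrt3}\,\Gb\times\pb.
\]
Any $\ab\in E$ satisfies $\rho_0<0$, $\rho_k>0$ for $k\ne0$, \emph{and} $\rho_0+\rho_2<0$, so the proven two-dimensional criterion applies directly. Moreover, since $\cb$ is orthogonal to the plane spanned by $\pb$ and $\Gb$ while the radius of $E$ is strictly less than $|\cb|$, every $\ab\in E$ automatically has $\Gb\cdot(\ab\times\pb)\ne0$, i.e.\ $\theta\ne0,\pi$, so no perturbation argument is needed. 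The constant $\sqrt3+\tfrac32=\bigl(2(\tfrac{2}{\sqrt3}-1)\bigr)^{-1}$ is exactly the threshold on $|\pb|$ beyond which $E$ is large enough to be guaranteed a lattice point.
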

\begin{proof}
According to Theorem \ref{thm:2d23d}, the spectrum of the class with parameters $\ab$, $\pb$, $\Gb$ is equivalent to the spectrum of a two-dimensional class with parameters $\tilde{a}_x$, $\tilde{a}_y$ given by \eqref{eq:3dparam}. The parameters $\rho_k$ in this two-dimensional problem are then given by \eqref{eq:rhofull}. Then by Theorem 8 in  \cite{DMW16}, if $\rho_0<0$, $\rho_k>0$ for all $k\neq 0$, and $\rho_0+\rho_2<0$ (or equivalently $\rho_0+\rho_{-2}<0$) then there is a real eigenvalue $\lambda$ of $\tilde{M}_2$ and therefore $\tilde{\mathcal{M}}$ satisfying 
\begin{equation}
	\lambda \geq \lambda^*=\sqrt{-\rho_1 (\rho_0+\rho_2)}.
\end{equation}
Note that the proof in \cite{DMW16} relies on the $\rho_k$ values symbolically, so as long as the conditions of the theorem are satisfied the proof holds.

We now wish to prove there is a lattice point $\ab\in \mathcal{L}$ (where $\mathcal{L}$ is the anisotropic lattice defined in Section \ref{sec:aniso}) such that $\rho_0<0$, $\rho_k>0$ for all $k\neq 0$ and $\rho_0+\rho_2<0$.
Consider the ellipsoid
\begin{equation}
	E= \{ \xb\in\mathbb{R}^3 \;| \; 
		 |\xb-\cb|<\left (\frac{2}{\sqrt{3}}-1 \right )|\pb| \}
\end{equation}
where
\begin{equation}
	\cb=\frac{1}{\sqrt{3}}(\Gb\times  \pb) .
\end{equation}
Note that $\cb\cdot\pb=0$, and $|\cb|=\frac{1}{\sqrt{3}}|\pb|$ as $\Gb$, $\Gb\times \pb$ are perpendicular so $\cb$ and $ \pb$ are perpendicular.

It can be shown by an argument analogous to that appearing in \cite{DMW16} that if there is an integer lattice point $\ab\in E \cap \mathcal{L}$, $\rho_0<0$, $\rho_k>0$ for all $k\neq 0$ and $\rho_0+\rho_{\pm 2}<0$.

The ellipsoid $E$ contains a cube with side length  
\begin{equation}	
	d=2\left (\frac{2}{\sqrt{3}}-1\right ) |\pb|
\end{equation}
by a geometric argument.
If $d>1$, this cube is guaranteed to contain some integer lattice point, which we can select as our $\ab$. Thus a sufficient condition for such an $\ab$ to exist is
\begin{equation}
	|\pb|>\frac{1}{2\left (\frac{2}{\sqrt{3}}-1\right )} =\sqrt{3}+\frac{3}{2}.
\end{equation}
If such an $\ab$ exists, the $\rho_k$ values satisfy the conditions above, and so by Theorem \ref{thm:2d23d} and Theorem 8 in \cite{DMW16}, the associated class has a positive real eigenvalue with an explicit lower bound $\lambda^*=\sqrt{-\rho_1(\rho_0+\rho_2)}$. Then including the scaling factor due to the reduced parameter angle $\theta$, the linearised class has a real eigenvalue larger than $\tilde{a}_y|\sin \theta|\lambda^*$. For all $\ab\in E$, $\theta\neq 0,\pi$, so this is a positive lower bound.
Therefore the class has a positive real eigenvalue, and therefore is linearly unstable.
\end{proof}

This proof is based on the corresponding proof for the two-dimensional problem in Lemma 11 of \cite{DMW16}.

It is important to stress here that the condition $|\pb|>\sqrt{3}+\sfrac{3}{2}$ is sufficient, but not neccesary. In fact, numerical experiments suggests that all flows \emph{not} of the form described in Theorem \ref{thm:stable3D} will be unstable; see \cite{Worthington17} for more details on the numerics. 
However, for fixed domain size $\aspb$ Theorem \ref{thm:unsteady3D} proves instability for all but finitely many parameters $\pb$. 
Note that there is no required condition on $\Gb$, reflecting the idea that when the spectrum is
not purely imaginary $\Gb$ has limited influence on the stability of the shear flow.
Contrast this to the situation when the spectrum is purely imaginary, where exceptional 
values of $\Gb$ lead to linear instability.

	\section{Conclusion}
	
	The Euler equations on a three-dimensional periodic domain are more complicated and less well-understood than the two-dimensional equivalent. We have shown that the linearised system decomposes into classes, and that generically these classes have  dynamics equivalent to some corresponding class in the two-dimensional problem. This allowed us to prove that linearly 
	stable shear flows exist, while we also showed that most shear flows of the type considered are unstable with hyperbolic eigenvalues.
	There is a subset of the shear flows considered for which Theorem \ref{thm:unsteady3D} could not prove instability, but 
	from numerical experiment hyperbolic instability is clear for all $\pb$ except those covered by Theorem \ref{thm:stable3D}. 
	
	The most interesting results are related to the appearance of exceptional classes that only exist in three dimensions.
	They are spectrally stable but linearly unstable, with some growth of the solution in time $t$ that is at least linear in $t$.
	For certain special values of the parameters the corresponding shear flow will possess this type of instability.
	The fundamental importance of these exceptional classes is that they are in a precise sense arbitrarily close 
	to any shear flow in the family we considered. This lead to our main result that all shear flows of the family considered
	are parametrically unstable, i.e. arbitrarily close nearby within the family is linearly unstable shear flow.
	It is conceivable that such exceptional classes could lead to the parametric instability of all shear flows on 
	a periodic three-dimensional domain.
	Finally we discussed the importance of these exceptional classes for the possibility of a loss of nonlinear stability 
	triggered by the nonnormality of the corresponding linearised operator. 
	We hope that further research into the role of these exceptional classes in creating instability can shed some light 
	on the dynamics of the Euler equations in three-dimensions, particularly the transition from stability to turbulence.

\section*{Acknowledgements} 
The authors would like to thank Robert Marangell, Yuri Latushkin, and James Meiss for their suggestions and help.

\appendix

\section{Cross Product Matrix}

\label{app:crosshat}

Define the cross product matrix of a vector $\ab$
\begin{equation}
	\crossmat{\ab}:=\begin{pmatrix}
			0 & -a_z  & \tilde{a}_y \\
			a_z & 0 & -\tilde{a}_x \\
			-\tilde{a}_y & \tilde{a}_x & 0
			\end{pmatrix}
\end{equation}
where $\ab=(\tilde{a}_x,\tilde{a}_y,a_z)$. Then for any $\bb\in\mathbb{R}^3$, $\crossmat{\ab}\bb=\ab\times\bb$. Note that $\widehat{\ab}$ is antisymmetric due to the antisymmetry of the cross product.
For an invertible three by three matrix $M$, $M\ab\times M\bb=\text{det}(M)M^{-\transp}(\ab\times\bb)$, and therefore $\crossmat{M\ab}=\text{det}(M)M^{-\transp}\crossmat{\ab}M^{-1}$.  In the special case of a rotation matrix $R\in SO(3)$, $\crossmat{R\ab}=R\crossmat{\ab}R^{\transp}$.

\bibliographystyle{spmpsci}      
\bibliography{stability3D}{}   

\begin{thebibliography}{10}
\providecommand{\url}[1]{{#1}}
\providecommand{\urlprefix}{URL }
\expandafter\ifx\csname urlstyle\endcsname\relax
  \providecommand{\doi}[1]{DOI~\discretionary{}{}{}#1}\else
  \providecommand{\doi}{DOI~\discretionary{}{}{}\begingroup
  \urlstyle{rm}\Url}\fi

\bibitem{Arnold78}
Arnold, V.: {Mathematical Methods of Classical Mechanics}.
\newblock Springer (1978)

\bibitem{Arnold98}
Arnold, V., Khesin, B.A.: {Topological Methods in Hydrodynamics}.
\newblock Springer (1998)

\bibitem{Friedlander99}
Belenkaya, L., Friedlander, S., Yudovich, V.: {The unstable spectrum of
  oscillating shear flows}.
\newblock SIAM Journal on Applied Mathematics \textbf{59}(5), 1701--1715 (1999)

\bibitem{Boberg88}
B{\"o}berg, L., Br{\"o}sa, U.: Onset of turbulence in a pipe.
\newblock Zeitschrift f{\"u}r Naturforschung A \textbf{43}(8-9), 697--726
  (1988)

\bibitem{Burger00}
Burger, E.B.: Exploring the number jungle: a journey into Diophantine analysis.
\newblock 8. American Mathematical Soc. (2000)

\bibitem{Constantin07}
Constantin, P.: On the {E}uler equations of incompressible fluids.
\newblock Bulletin of the American Mathematical Society \textbf{44}(4),
  603--621 (2007)

\bibitem{Dullin19}
Dullin, H., Latushkin, Y., Marangell, R., Vasudevan, S., Worthington, J.:
  Instability of unidirectional flows for the 2d $\alpha$-{E}uler equations.
\newblock arXiv preprint arXiv:1901.01367  (2019)

\bibitem{Dullin16}
Dullin, H., Worthington, J.: Stability results for idealised shear flows on a
  rectangular periodic domain.
\newblock J. Math. Fluid Mech. \textbf{20}, 473--484 (2018)

\bibitem{DLMVW19}
Dullin, H.R., Latushkin, Y., Marangell, R., Vasudevan, S., Worthington, J.:
  Instability of unidirectional flows for the 2d {$\alpha$}-euler equations p.
  arXiv:1901.01367 (2019)

\bibitem{Dullin18}
Dullin, H.R., Meiss, J.D., Worthington, J.: Poisson structure of the
  three-dimensional {E}uler equations in {F}ourier space.
\newblock arXiv preprint arXiv:1812.09709  (2018)

\bibitem{DMW16}
Dullin, H.R., Worthington, J., Marangell, R.: {Instability of Equilibria for
  the Two-Dimensional {E}uler Equations on the Torus}.
\newblock SIAM Journal on Applied Mathematics \textbf{76}, 1446--1470. (2016)

\bibitem{Eckhardt07}
Eckhardt, B., Schneider, T.M., Hof, B., Westerweel, J.: Turbulence transition
  in pipe flow.
\newblock Annu. Rev. Fluid Mech. \textbf{39}, 447--468 (2007)

\bibitem{Elsner87}
Elsner, L., Paardekooper, M.: On measures of nonnormality of matrices.
\newblock Linear Algebra and its Applications \textbf{92}, 107--123 (1987)

\bibitem{Gibbon08}
Gibbon, J.D.: The three-dimensional {E}uler equations: Where do we stand?
\newblock Physica D: Nonlinear Phenomena \textbf{237}(14), 1894--1904 (2008)

\bibitem{Gray06}
Gray, R.M.: Toeplitz and circulant matrices: A review.
\newblock now publishers inc (2006)

\bibitem{Morrison13}
Hirota, M., Morrison, P.J., Hattori, Y.: {Variational necessary and sufficient
  stability conditions for inviscid shear flow}.
\newblock In: {Proceedings of the Royal Society of London A: Mathematical,
  Physical and Engineering Sciences}, vol. 470, p. 20140322. The Royal Society
  (2014)

\bibitem{Karner02}
Karner, H., Schneid, J., Ueberhuber, C.W.: {Spectral decomposition of real
  circulant matrices}.
\newblock Linear Algebra and its Applications \textbf{367}, 301--311 (2002)

\bibitem{Li04}
Latushkin, Y., Li, Y.C., Stanislavova, M.: {The Spectrum of a Linearized 2{D}
  {E}uler Operator}.
\newblock Studies in Applied Mathematics \textbf{112}, 259--270 (2004)

\bibitem{Li00}
Li, Y.C.: {On 2{D} {E}uler equations. {I}. {O}n the energy-Casimir stabilities
  and the spectra for linearized 2{D} {E}uler equations}.
\newblock Journal of Mathematical Physics \textbf{41}(2), 728--758 (2000)

\bibitem{Morrison98}
Morrison, P.J.: {Hamiltonian description of the ideal fluid}.
\newblock Reviews of modern physics \textbf{70}(2), 467 (1998)

\bibitem{Rayleigh79}
Rayleigh, L.: On the stability, or instability, of certain fluid motions.
\newblock Proceedings of the London Mathematical Society \textbf{1}(1), 57--72
  (1879)

\bibitem{Shebalin97}
Shebalin, J.V., Woodruff, S.L.: Kolmogorov flow in three dimensions.
\newblock Physics of Fluids \textbf{9}(1), 164--170 (1997)

\bibitem{Trefethen05}
Trefethen, L.N., Embree, M.: Spectra and pseudospectra: the behavior of
  nonnormal matrices and operators.
\newblock Princeton University Press (2005)

\bibitem{Trefethen93}
Trefethen, L.N., Trefethen, A., Reddy, S., Driscoll, T., et~al.: Hydrodynamic
  stability without eigenvalues.
\newblock Science \textbf{261}(5121), 578--584 (1993)

\bibitem{Worthington17}
Worthington, J.: Stability theory and {H}amiltonian dynamics in the {E}uler
  ideal fluid equations.
\newblock Ph.D. thesis, University of Sydney (2017)

\end{thebibliography}

\end{document}